\documentclass[11pt]{article}
\usepackage{amsfonts}
\usepackage{amsmath}
\usepackage{amssymb}

\newcommand{\Z}{\mathbb{Z}}
\newcommand{\Q}{\mathbb{Q}}
\newcommand{\R}{\mathbb{R}}
\newcommand{\C}{\mathbb{C}}

\usepackage{physics}
\usepackage{fancyhdr}
\usepackage{hyperref}
\usepackage{color}
\usepackage{enumerate}

\usepackage{cite}
\usepackage{amsthm}

\newtheorem{theorem}{Theorem}

\newtheorem{proposition}[theorem]{Proposition}
\newtheorem{corollary}[theorem]{Corollary}
\newtheorem{lemma}[theorem]{Lemma}
\theoremstyle{remark}
\newtheorem{remark}[theorem]{Remark}

\theoremstyle{definition}
\newtheorem{definition}[theorem]{Definition}
\newtheorem*{definition*}{Definition}
\newtheorem*{claim*}{Claim}
\newtheorem*{theorem*}{Theorem}
\newtheorem*{corollary*}{Corollary}

\topmargin= .5cm
\textheight= 20cm
\textwidth= 32cc
\baselineskip=16pt

\evensidemargin= -1cm
\oddsidemargin= .5cm
\pagestyle{fancyplain}
\fancyhead{}

\title{Rankin--Cohen Type Differential Operators \newline on Hermitian Modular Forms}
\author{Francis Dunn\footnote{Affiliation: University of Oregon, Department of Mathematics, USA. \newline\hspace*{1.8em} Email: fdunn@uoregon.edu\newline\hspace*{1.8em} ORCID: 0000-0003-1723-1190 }}
\date{}

%%% CHANGE LOG SINCE FIRST VERSION %%%
% Added reference to Ban's 2006 paper in \S 1.3
% Remark 12 added regarding the condition on $v$ in Theorem 11 
% Corollary 13 (previously Corollary 12, page 7) statement changed to ``if'' from ``if and only if''
% Remark 14 (page 8) added to explain above change
% Definition of group action page 6 for ease of reading
% Some minor wording changes

\begin{document}
	
	\maketitle
	\vfill
	\begin{center}\subsubsection*{Abstract}\end{center}
	We construct Rankin--Cohen type differential operators on Hermitian modular forms of signature $(n,n)$. The bilinear differential operators given here specialize to the original Rankin--Cohen operators in the case $n=1$, and more generally satisfy some analogous properties, including uniqueness.
	Our approach builds on previous work by Eholzer--Ibukiyama in the case of Siegel modular forms, together with results of Kashiwara--Vergne on the representation theory of unitary groups.
	\vfill 
	
	\tableofcontents
	
	\newpage
	\section{Introduction}

	\subsection{Overview} 
	
	In the classical setting, recall that the derivative of a holomorphic modular form of integral weight on the complex upper half-plane is not in general a modular form since the derivative fails to satisfy the correct transformation properties. However, in 1956 R. A. Rankin was able to describe differential operators sending modular forms to modular forms \cite{Rankin}. H. Cohen exhibited a special case in \cite{Cohen}, introducing particular bilinear differential operators on the graded ring of modular forms. These Rankin--Cohen operators or Rankin--Cohen brackets have proven to be interesting objects to study (see, for example, \cite{Zag}) and provide the unique way of combining derivatives of two modular forms to produce another modular form of a higher weight. That is, for modular forms $f$ and $g$ of weights $k$ and $\ell$ respectively, for each integer $v>0$ the Rankin--Cohen bracket indexed by $k$, $\ell$ and $v$ is the unique bilinear differential operator (i.e. a bilinear combination of the derivatives of $f$ and $g$), up to rescaling, which gives a modular form of weight $k+\ell+2v$. Furthermore, for $v>0$ the resulting modular forms are cusp forms, which hold special significance in the literature. In this paper we tackle the case of scalar-valued Hermitian modular forms of signature $(n,n)$, and produce analogous results in this setting.

	\subsection{Main Results}

	We prove the following results for Hermitian modular forms of signature $(n,n)$ under some technical conditions on the weights $k_{1},\dots,k_{r}$ and $v$ below:
	
	\begin{theorem*}[A] (Imprecise version of Theorem \ref{thm} and Corollary \ref{coro}.) \label{thm a} Covariant multilinear differential operators sending $r$ scalar-valued Hermitian modular forms of integer weights $k_{1},\dots,k_{r}$ respectively to a scalar-valued Hermitian modular form of weight $v+\sum_{i=1}^{r}k_{i}$ for a fixed $v\geq 0$ correspond to a certain space of homogeneous pluri-harmonic polynomials (as defined in \S\ref{section ph poly}). \end{theorem*}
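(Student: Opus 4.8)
**Plan for proving Theorem A (correspondence between covariant multilinear differential operators and pluri-harmonic polynomials).**

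The plan is to transform the covariance condition for differential operators into a statement about polynomials via the standard symbol calculus, and then identify exactly which polynomials arise. First I would write a general multilinear differential operator $D$ acting on $r$ functions on the Hermitian upper half-space $\mathcal{H}_n$ (the domain for signature $(n,n)$ modular forms) as $D = P(\partial_{Z_1},\dots,\partial_{Z_r})$, where each $\partial_{Z_i}$ is the matrix of partial derivatives in the $i$-th slot and $P$ is a polynomial in the entries of $r$ complex $n\times n$ matrices $T_1,\dots,T_r$. The key elementary fact is that the Cauchy--Riemann / holomorphy of the inputs lets one restrict attention to holomorphic polynomial coefficients, and after precomposing with the diagonal embedding $Z\mapsto(Z,\dots,Z)$ the output $\big(D(f_1,\dots,f_r)\big)(Z)$ is again a function on $\mathcal{H}_n$.

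Next I would impose the covariance requirement: for every $\gamma$ in the Hermitian modular group, $D(f_1|_{k_1}\gamma,\dots,f_r|_{k_r}\gamma) = \big(D(f_1,\dots,f_r)\big)|_{v+\sum k_i}\gamma$. Since the group is generated by translations, the $GL_n$-type block-diagonal elements, and the key inversion element, the translation invariance is automatic, the $GL_n$-part forces $P$ to be equivariant for a suitable action of $U(n)$ (or $GL_n(\mathbb{C})$) on the matrix variables — this is where the representation theory of the unitary group enters, via the cited Kashiwara--Vergne results — and the inversion element produces, after a somewhat involved but formal computation with the automorphy factor $\det(CZ+D)$, a differential equation on $P$. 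The content of that computation is that covariance is equivalent to $P$ being annihilated by a Laplace-type operator built from the matrix variables, i.e. $P$ being \emph{pluri-harmonic} in the sense defined in the referenced section, together with the right homogeneity degree (degree matching $v$) and the right $U(n)$-equivariance dictated by the weights $k_1,\dots,k_r$. I would present this as: covariant operators $\hookrightarrow$ pluri-harmonic polynomials with prescribed homogeneity and equivariance, with the map being $D\mapsto$ (symbol of $D$), and conversely every such polynomial, reinterpreted as a constant-coefficient differential operator and then diagonalized, yields a covariant operator.

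The main obstacle I expect is the inversion computation: showing that the transformation behavior under $Z\mapsto -Z^{-1}$ (suitably interpreted in the Hermitian setting, where one conjugates as well) translates \emph{exactly} into the pluri-harmonicity condition, with no extra constraints and no loss. This requires carefully expanding $\det(CZ+D)^{-k_i}$-type factors, applying the chain rule to the matrix derivatives $\partial_{Z_i}$, and recognizing the resulting obstruction terms as precisely the image of the pluri-harmonic Laplacian; the Eholzer--Ibukiyama Siegel-case argument gives the template, but the Hermitian signature $(n,n)$ structure means one works with $U(n,n)$ rather than $Sp_{2n}$, so the relevant polynomial invariants and the form of the Laplacian change, and one must check that Kashiwara--Vergne's decomposition of the space of pluri-harmonic polynomials under $U(n)\times U(n)$ still supplies a basis adapted to the weight data. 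Once that correspondence is established, Theorem A follows, and the uniqueness statement (Corollary) reduces to the multiplicity-one statement for the relevant $U(n)$-type in the pluri-harmonic decomposition.
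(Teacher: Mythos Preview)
Your overall architecture matches the paper's: reduce covariance to the three types of generators of $U(n,n)$, with translations trivial and the block-diagonal part forcing homogeneity, leaving the inversion $Z\mapsto -Z^{-1}$ as the substantive case. That much is correct.

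The gap is in how you plan to handle the inversion. You propose to expand the automorphy factors $\det(CZ+D)^{-k_i}$ via the chain rule and ``recognize the resulting obstruction terms as precisely the image of the pluri-harmonic Laplacian.'' The paper does \emph{not} do this, and for good reason: such a direct expansion is unwieldy for general $P$ and general degree. Instead, the paper first reduces to a single test function $F_0(Z)=\exp(i\operatorname{Tr}(Y^{\ast}ZY))$, observing that the monomials $Q_M(\partial/\partial Z)F_0$ are linearly independent, so it suffices to verify the commutation relation for $F_0$ alone. For this $F_0$, the inversion computation is carried out via an \emph{integral formula} of Kashiwara--Vergne (their Lemma~4.2), which expresses $\det(-Z)^{-k}\exp(i\operatorname{Tr}(Y^{\ast}(-Z^{-1})Y))$ as a Gaussian-type integral over $M_{n,k}(\mathbb{C})$; pluriharmonicity of $P$ is exactly what allows one to pull $P$ through this integral. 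The converse direction runs the same integral formula backwards, together with a mean-value characterization of harmonicity.

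You cite Kashiwara--Vergne, but only for their decomposition of pluriharmonic polynomials under the unitary group; you miss that the operative input here is their integral identity, not the representation-theoretic multiplicity statement. (The multiplicity-one content belongs to Theorem~B, not to Theorem~A or its Corollary.) Without the test-function reduction and the integral formula, your inversion step is not a plan but a hope.
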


	\begin{theorem*}[B] (Imprecise version of Theorem \ref{thm coeff}.) \label{thm b} In the case $r=2$, for each $v>0$ and $n>1$ there is a unique (up to rescaling) differential operator sending scalar-valued Hermitian modular forms of weights $k_{1}$ and $k_{2}$ respectively to a Hermitian modular form of weight $k_{1}+k_{2}+2v$. We also give integral linear relations satisfied by the coefficients of the differential operators. \end{theorem*}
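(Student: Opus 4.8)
The plan is to derive Theorem~\ref{thm coeff} from Theorem~A (i.e.\ Theorem~\ref{thm}) together with the multiplicity-freeness results of Kashiwara--Vergne. By Theorem~A, bilinear covariant differential operators sending scalar-valued Hermitian modular forms of weights $k_1,k_2$ to a scalar-valued one of weight $k_1+k_2+2v$ are in bijection with a finite-dimensional space $\mathcal{H}=\mathcal{H}_{k_1,k_2,v}$ of homogeneous pluri-harmonic polynomials $P(R_1,R_2)$ in two $n\times n$ complex matrix variables $R_1,R_2$ (the symbols dual to $\partial_{Z_1}$ and $\partial_{Z_2}$), of the homogeneity required for the weight shift $2v$, and transforming under the $GL_n(\C)\times GL_n(\C)$-action inherited from $K_{\C}$ through the character attached to the scalar weight $k_1+k_2+2v$; in particular both $GL_n(\C)$-factors then act through powers of the determinant. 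Thus the uniqueness assertion is exactly the statement $\dim_{\C}\mathcal{H}\le 1$, and the remaining content is that $\dim_{\C}\mathcal{H}=1$ under the stated hypotheses on $k_1,k_2,v$.

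For the upper bound $\dim_{\C}\mathcal{H}\le 1$ I would reduce, using Theorem~A and the description of pluri-harmonic polynomials in \S\ref{section ph poly}, to computing the multiplicity with which a prescribed $GL_n(\C)\times GL_n(\C)$-character (a pair of powers of $\det$ determined by $k_1,k_2,v$) occurs in the pluri-harmonic polynomials on $M_n(\C)\oplus M_n(\C)$ of the relevant degree. By the Kashiwara--Vergne theory this space decomposes multiplicity-freely as a module under $GL_n(\C)\times GL_n(\C)$ together with the commuting reductive ``dual'' action attached to the space that indexes the two forms $(f,g)$; because $r=2$, that auxiliary group has very small rank, and an analysis of which of its irreducible types can contribute a vector of the prescribed $GL_n(\C)\times GL_n(\C)$-character and degree shows that at most one does and that it contributes a one-dimensional space, whence the multiplicity is $\le 1$. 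This is the analogue, in the Hermitian setting, of the mechanism that forces uniqueness for two forms in the Siegel case of Eholzer--Ibukiyama, and it genuinely fails once $r\ge 3$.

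For existence, $\dim_{\C}\mathcal{H}\ge 1$, I would verify that under the technical hypotheses ($v>0$, $n>1$, and the lower bounds on the weights) the highest weights isolated above are admissible --- the $GL_n(\C)$-characters are nonnegative powers of $\det$ and the auxiliary highest weight is dominant --- so that the corresponding Kashiwara--Vergne summand is nonzero; this amounts to checking explicit inequalities once the dictionary between $(k_1,k_2,v,n)$ and highest weights is fixed, and it is here that the conditions $n>1$ and the bounds on $k_1,k_2$ are used. Finally, the integral linear relations among the coefficients follow from writing the defining conditions of $\mathcal{H}$ explicitly: expanding $P=\sum_{\alpha,\beta}c_{\alpha\beta}R_1^{\alpha}R_2^{\beta}$ and imposing the pluri-harmonic equations $\Delta_{ij}P=0$, where each $\Delta_{ij}$ is a second-order constant-coefficient operator in the entrywise partials $\partial_{(R_i)_{ab}}$, yields a homogeneous linear system in the $c_{\alpha\beta}$ whose coefficients are polynomials in $k_1,k_2$ with integer coefficients; since $\mathcal{H}$ is one-dimensional, a suitable normalization of the unique operator satisfies exactly these integral recursions, and specializing $n=1$ should recover the classical Rankin--Cohen binomial coefficients as a consistency check.

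The step I expect to be the main obstacle is the second one. The Siegel-case argument of Eholzer--Ibukiyama rests on dual pairs of symplectic type and on polynomials in \emph{symmetric} matrices, whereas the Hermitian $U(n,n)$ case forces one to work with holomorphic polynomials in \emph{general} matrices and with the multiplicity-free decomposition of pluri-harmonic polynomials for unitary groups due to Kashiwara--Vergne. Pinning down the precise dictionary between the scalar weight $k_1+k_2+2v$ and the $GL_n(\C)\times GL_n(\C)$-highest weight, correctly identifying the auxiliary dual group for $r=2$, and carrying out the combinatorial analysis showing that its relevant type is unique (so that the multiplicity is exactly one under the technical conditions), are the points requiring the most care.
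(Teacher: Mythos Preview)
Your route diverges from the paper's in two important ways, and the second contains a genuine gap.

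For the one-dimensionality of $\mathcal{H}_{n,v}(k_1,k_2)$, you propose invoking a Kashiwara--Vergne multiplicity-free decomposition together with a ``small auxiliary dual group attached to $r=2$''. But in this setup the triple action on $P(X,Y)$ is by $GL_n\times GL_n\times GL_k$ with $k=k_1+k_2$, and the $K$-invariance is the condition of triviality under the block-diagonal subgroup $GL_{k_1}\times GL_{k_2}\subset GL_k$, not under a rank-$r$ group; so isolating the desired piece would require a branching computation from $GL_k$ down to $GL_{k_1}\times GL_{k_2}$ that you never carry out, and the Siegel/Eholzer--Ibukiyama picture of a small dual group indexed by $r$ does not transfer directly. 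The paper does not use any Kashiwara--Vergne decomposition in the proof of Theorem~\ref{thm coeff}. Instead it proves directly that the ambient space $\mathcal{Q}_{n,v}(2)$ is the degree-$v$ part of a free polynomial ring $\C[Q_0,\ldots,Q_n]$, where the generators are defined by $\det(W+\lambda Z)=\sum_{a=0}^n Q_a(W,Z)\lambda^a$; this uses algebraic independence of the $Q_a$ (reduction to elementary symmetric polynomials) and a dimension count via Hua's decomposition and the Littlewood--Richardson coefficients $\mathrm{LR}^{(v^n)}_{\lambda\mu}$.

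The larger gap is in your treatment of the integral linear relations. Expanding $P=\sum c_{\alpha\beta}R_1^{\alpha}R_2^{\beta}$ in monomials in the $2n^2$ matrix entries and imposing $\Delta_{ij}P=0$ would produce an enormous system with no visible triangular structure, and it is not the system the theorem asserts. The paper instead rewrites $\sum_i\Delta_{i,i}$ as an operator $\sum_i\bigl(L^{(k_1)}_{i,i}+L'^{(k_2)}_{i,i}\bigr)$ on $Q(W,Z)$, computes its action on the generators (obtaining for instance $L^{(k_1)}_{i,i}Q_a=(k_1+1-n+a)\,Q^{\left[\substack{i\\i}\right]}_a$) and on products, proves a separate lemma that the minor-sums $\sum_i Q^{\left[\substack{i\\i}\right]}_a$ are linearly independent over $\mathcal{Q}_n(2)$, and only then extracts recursions among the $\binom{n+v}{v}$ coefficients $C(\alpha)$ of $Q=\sum_\alpha C(\alpha)\prod_m Q_m^{\alpha_m}$. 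Those recursions are triangular for the lexicographic order on $\alpha$, which simultaneously yields uniqueness and the explicit integral relations; the cusp-form application in Proposition~\ref{prop cusp} also relies on this $Q_m$-basis description. The identification $\mathcal{Q}_{n,v}(2)=\C[Q_0,\ldots,Q_n]_v$ is the key structural idea missing from your proposal.
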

	
	As a consequence of Theorem B, we extract that for $v>0$ the image of these bilinear differential operators is contained in the space of cusp forms in Proposition \ref{prop cusp}. In addition, since the relations between coefficients of the operators are integral, the ring of definition for the modular forms is preserved.\\
	
	This paper builds on several previous works generalizing Rankin--Cohen type operators beyond the classical setting. Choie--Eholzer used Jacobi forms \cite{ChoEho} to constrct Rankin--Cohen type operators for Siegel modular forms, which Eholzer--Ibukiyama \cite{EhoIbu97} and Ibukiyama \cite{Ibu99} later generalized using a different approach involving theory of pluriharmonic polynomials, in part developed by Kashiwara--Vergne \cite{KasVer}. In the case of Hermitian modular forms, Martin--Senadheera \cite{MarSen} used Jacobi forms to produce analogues of Rankin--Cohen brackets for Hermitian modular forms over the field extension $\Q(i)/\Q$ and signature $(2,2)$. This paper follows a similar approach to \cite{EhoIbu97}, again using results of \cite{KasVer}, to produce Rankin--Cohen type operators on Hermitian modular forms in a more general setting than before, where we allow the underlying field to be any CM field and for signature $(n,n)$. 
	
	\subsection{Connection to other Results}
	
	Rankin--Cohen type differential operators have been valuable tools to study and prove results concerning automorphic forms. In the classical setting, the Rankin--Cohen brackets endow rich algebraic structure on the graded algebra of modular forms as described by Zagier \cite{Zag}. Their algebraic properties have been the subject of further study, for example by El Gradechi \cite{Gradechi}. Rankin--Cohen operators have also found use in settings such as the study of mod $p$ differential operators \cite{EFGMM}. This paper helps lay the groundwork for similar applications in the setting of Hermitian modular forms. 
 
    After posting the first version of this paper, the author became aware of the work of Ban \cite{Ban06} studying Lie-theoretic operators corresponding to Rankin--Cohen operators for automorphic forms on $\text{SU}(p,q)$. While the setting is similar, the perspective and approach of Ban is very different to that of this paper, and there is not an explicit construction of the operators concerned. It would be interesting to study how these results correspond. 
    
    Furthermore, the image of the Rankin--Cohen operators in the elliptic and Siegel settings has been investigated in, for example, \cite{Kohnen1991} and \cite{JhaSahu}. Similar investigations could be now be carried out for the operators constructed in this paper, however some technical results would be necessary for the same approach to be used in the Hermitian setting.  
	
	\subsection{Organization}
	
	In \S 2 we recall the relevant definitions for Hermitian modular forms  and introduce the particular spaces of polynomials of interest to us. In \S 3 we establish the correspondence between differential operators on Hermitian modular forms and certain polynomials, stating and proving the precise version of Theorem A. In \S 4 we prove Theorem B by giving an explicit description of the particular space of polynomials that we are interested in, and then finding linear relations that coefficients of the specific polynomials we care about must satisfy. Finally, in \S 5 we consider the effect of our differential operators on Fourier expansions of Hermitian modular forms, and prove that the non-trivial bilinear operators produce cusp forms.
	
	\subsection{Acknowledgements}
	
	The author thanks their supervisor Ellen Eischen, as well as Ben Elias, Ben Young, Jon Brundan, and Nicolas Addington for useful conversations relating to the work in this paper. The author is also grateful to Tomoyoshi Ibukiyama for his comments on an early version of this paper, as well as the referees for their helpful suggestions. This work was supported in part by the Jack and Peggy Borsting Award.

	\section{Preliminaries}

	\subsection{Unitary Groups and Hermitian Modular Forms}

	First, we recall the key definitions for the specific kind of modular forms with which we are concerned. 
	
	\begin{definition}[Hermitian Upper Half-space] \label{def: Hn}
		We define the \textit{Hermitian upper half-space} as
		\[\mathbb{H}_{n}:=\left\{Z\in M_{n,n}(\C) \bigm| \frac{1}{2i}(Z-Z^{\ast})>0 \right\} \]
		where $Z^{\ast}$ denotes the conjugate-transpose of $Z$, and we write $M>0$ for a square matrix $M$ to mean that $M$ is positive definite.
	\end{definition}
	
	\vspace*{1ex}
	
	\begin{definition}[Unitary Group $U(n,n)$)] \label{def: Unn}
		Let $J=\begin{pmatrix} 0 & -i1_{n}\\ i1_{n} & 0\end{pmatrix}\in M_{2n,2n}$. Then we define
		\[ U(n,n):=\{ \gamma\in M_{2n,2n}(\C)\mid \gamma^{\ast}J\gamma=J\}\]
		to be the \textit{unitary group of signature $(n,n)$}. A collection of generators of $U(n,n)$ is given by matrices of the following three forms: 
		\begin{enumerate}[(I)]
			\item $\begin{pmatrix} A & 0\\ 0 & (A^{\ast})^{-1}\end{pmatrix}$ for any  $A\in\text{GL}(n,\C)$
			\item $\begin{pmatrix} 1_{n} & B\\ 0 & 1_{n}\end{pmatrix}$ where $B$ is any $n$-by-$n$ Hermitian matrix
			\item $\begin{pmatrix} 0 & -1_{n}\\ 1_{n} & 0\end{pmatrix}$.
		\end{enumerate}
	\end{definition}
	
	\vspace*{1ex}

	We have an action of $U(n,n)$ on $\mathbb{H}_{n}$ given by 
	\[ \begin{pmatrix} A & B\\ C & D\end{pmatrix}\cdot Z= (AZ+B)(CZ+D)^{-1} \]
	for $\begin{pmatrix} A & B\\ C & D\end{pmatrix}\in U(n,n)$ and $Z\in\mathbb{H}_{n}$.
	Explicitly, for the three types of generators above we have
	\begin{enumerate}[(I)]
		\item $\begin{pmatrix} A & 0\\ 0 & (A^{\ast})^{-1}\end{pmatrix} \cdot Z = AZA^{\ast}$
		\item $\begin{pmatrix} 1_{n} & B\\ 0 & 1_{n}\end{pmatrix} \cdot Z = Z+B$
		\item $\begin{pmatrix} 0 & -1_{n}\\ 1_{n} & 0\end{pmatrix}\cdot Z = -Z^{-1}$.
	\end{enumerate}
	
	\vspace*{1ex}
	
	\begin{definition}[Hermitian Modular Forms] \label{def: hmf}
		Fix a quadratic imaginary extension $L$ of a totally real number field $L^{+}$. Let $\Gamma$ be a congruence subgroup of $U(n,n)(\mathcal{O}_{L})$.
		A (scalar-valued) Hermitian modular form of level $\Gamma$ and weight $k\in\Z$ is a holomorphic function $F:\mathbb{H}_{n}\to\C$ such that for all $\gamma=\begin{pmatrix}A&B\\C&D\end{pmatrix}\in\Gamma$
		\[ F(Z)=\text{det}(CZ+D)^{-k}F(\gamma Z). \]
		
	\end{definition}
	
	\vspace*{1ex}
	
	\subsection{Pluriharmonic Polynomials} \label{section ph poly}
	
	The differential operators we shall construct are described in terms of specific polynomials, which we shall now define (as in \cite{KasVer}).
	
	\vspace*{1ex}
	
	\begin{definition}[Harmonic and Pluriharmonic]\label{def: harmonic}
		
		Let $P(X,Y)$ be a polynomial in matrix variables $X=(x_{s,u})$ and $Y=(y_{s,u})$ in $M_{n,k}(\C)$.  Set
		
		\[ \Delta_{s,t}(X,Y)=\sum_{u=1}^{k}\frac{\partial^{2}}{\partial x_{s,u}\partial y_{t,u}} \hspace*{5ex} (1\leq s,t\leq n). \]

		Then we say that $P$ is \textit{harmonic} if $\sum_{s=1}^{n}\Delta_{s,s}(X,Y)P=0$ and \textit{pluriharmonic} if $\Delta_{s,t}(X,Y)P=0$ for every $1\leq s,t\leq n$.  \end{definition}
	
	\vspace*{1ex}
	
	\begin{definition}[Homogeneous] \label{def: homogeneous} There is a group action of $\text{GL}(n,\C)\times\text{GL}(n,\C)\times\text{GL}(k,\C)$ on polynomials $P(X,Y)$ given by 
		
		\[ (A,B,C)\cdot P(X,Y)=P(AXC^{-1},BY{}^{t}C), \]

		\noindent where we write ${}^{t}M$ to denote the transpose of a matrix $M$. We say that $P$ is \textit{homogeneous of degree $v$} if, setting $C=1_{k}$, for every $(A,B)\in\text{GL}(n,\C)\times\text{GL}(n,\C)$ we have
		
		\[ (A,B,1_{k})\cdot P(X,Y)=\text{det}(A)^{v}\text{det}(B)^{v}\times P(X,Y).\]
		
	\end{definition}
	
	\vspace*{1ex}
	
	\begin{remark} \label{rem: harmonic}
		Note that $P(X,Y)$ is pluriharmonic if and only if $P(AX,BY)$ is harmonic for every $(A,B)\in\text{GL}(n,\C)\times\text{GL}(n,\C)$. So a homogeneous polynomial is pluriharmonic if and only if it is harmonic. 
	\end{remark}
	
	\vspace*{1ex}
	
	\begin{definition}[Invariance and Association] \label{def: invariance}
		For positive integers $r$ and $k_{1},\dots,k_{r}$ with $k_{1}+\dots+k_{r}=k$, let $K=K(k_{1},\dots,k_{r})$ be the (image of the) block-diagonal embedding 
		\[ \text{GL}(k_{1},\C)\times\dots\times\text{GL}(k_{r},\C)\hookrightarrow\text{GL}(k,\C). \]
		\noindent We say $P(X,Y)$ is \textit{$K$-invariant} if $P(XC^{-1},Y{}^{t}C)=P(X,Y)$ for every matrix $C$ in $K$. For any such polynomial $P$, there is a polynomial $Q(R_{1},\dots,R_{r})$ with variables $R_{1},\dots,R_{r}\in M_{n,n}(\C)$ such that, writing $X=(X_{1},\dots,X_{r})$ and $Y=(Y_{1},\dots,Y_{r})$ with $X_{i},Y_{i}\in M_{n,k_{i}}$ for $i=1,\dots,r$, $P(X,Y)=Q(X_{1}{}^{t}Y_{1},\dots,X_{r}{}^{t}Y_{r})$. For such a $P$ and $Q$ we say that $Q$ is \textit{associated to} $P$. \end{definition}
	
	\vspace*{1ex}
	
	The particular spaces of polynomials we are concerned with are as follows:
	
	\begin{definition}[Notation] \label{def: notation}
		Denote by $\mathcal{P}_{n,v}(k_{1},\dots,k_{r})$ the space of all $K$-invariant homogeneous polynomials $P$ of degree $v$.
		
		Let $\mathcal{Q}_{n,v}(r)$ be the space of all polynomials $Q$ in the coefficients of $n\times n$ matrices $R_{1},\dots,R_{r}$ such that 
		\[Q(AR_{1}{}^{t}B,\dots, AR_{r}{}^{t}B)=\text{det}(A)^{v}\text{det}(B)^{v}Q(R_{1},\dots,R_{r}) \] for all $(A,B)\in\text{GL}(n,\C)\times\text{GL}(n,\C)$. Put $\mathcal{Q}_{n}(r):=\oplus_{v\geq 0}\mathcal{Q}_{n,v}(r)$.  
		
		Let $\mathcal{H}_{n,v}(k_{1},\dots,k_{r})$ be the subspace of $\mathcal{Q}_{n,v}(r)$ consisting of the elements $Q\in\mathcal{Q}_{n,v}(r)$ such that $Q(X_{1}{}^{t}Y_{1},\dots,X_{r}{}^{t}Y_{r})$ is pluriharmonic considered as a polynomial in the entries of $X=(X_{1},\dots,X_{r})$ and $Y=(Y_{1},\dots,Y_{r})$, $X_{i},Y_{i}\in M_{n,k_{i}}$. \end{definition}
	
	\vspace*{1ex}
	
	\begin{remark} 
		When $k_{i}\geq n$ for each $1\leq i\leq r$, $\mathcal{H}_{n,v}(k_{1},\dots,k_{r})$ is the space of polynomials $Q$ associated to the degree-$v$ homogeneous $K$-invariant pluriharmonic polynomials $P\in\mathcal{P}_{n,v}(k_{1},\dots,k_{r})$ as above.
	\end{remark}

	\vspace*{2ex}

	We shall be concerned with differential operators associated with these pluriharmonic polynomials, with the association made explicit by the following definition:

	\begin{definition}[Differential Operators]
		
		For a matrix variable $Z=(z_{s,t})\in M_{n,n}$ we define the $n\times n$ matrix of derivatives $\frac{\partial}{\partial Z}$ by
		
		\[ \left(\frac{\partial}{\partial Z}\right)_{s,t} = \frac{\partial}{\partial z_{t,s}}. \]
		
		Then, given a polynomial $Q(R_{1},\dots,R_{r})$ in the entries of $n\times n$ matrices $R_{1},\dots,R_{r}$, we can define a differential operator on holomorphic functions $M_{n,n}^{r}\to\C$ in variables $Z_{1},\dots,Z_{r}$ by $Q\left(\frac{\partial}{\partial Z_{1}},\dots,\frac{\partial}{\partial Z_{r}}\right)$. \end{definition}
	
	\vspace*{1ex}
	
	\section{Characterization of Differential Operators}
	
	In this section we give the full, precise statement of Theorem \nameref{thm a} and prove the characterization of differential operators on Hermitian modular forms in terms of the pluriharmonic polynomials defined above. 
	
	\begin{theorem}\label{thm} Let $L/L^{+}$ be a quadratic imaginary extension of a totally real number field, $k_{1},\dots,k_{r},v\in\Z_{>0}$ with each $k_{i}\geq n$ and $\frac{\abs{\mu(L)}}{2}[\mathcal{O}_{L}^{\times}:\mu(L)\mathcal{O}^{\times}_{L^{+}}]$ dividing $v$, where $\mu(L)$ is the set of roots of unity contained in $L$. For a polynomial $Q$ on $M_{n,n}^{r}$, we have
		
		\begin{multline} \label{eq comm rel}
			Q\left(\frac{\partial}{\partial Z}\right)\left(F(\gamma Z_{1},\dots,\gamma Z_{r})\prod_{i=1}^{r}\text{\emph{det}}(CZ_{i}+D)^{-k_{i}}\right)|_{Z_{1}=\dots=Z_{r}=Z} \\ =\text{\emph{det}}(CZ+D)^{-(k+2v)}Q\left(\frac{\partial}{\partial \gamma Z}\right)\left(F(\gamma Z,\dots,\gamma Z)\right)
		\end{multline}
		
		\noindent for $k=\sum_{j=1}^{r}k_{r}$ and every $\gamma=\begin{pmatrix} A & B\\ C & D\end{pmatrix}\in  U(n,n)(\mathcal{O}_{L})$, if and only if $Q\in\mathcal{H}_{n,v}(k_{1},\dots,k_{r})$.\end{theorem}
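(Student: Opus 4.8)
The plan is to reduce \eqref{eq comm rel} to an identity that can be checked on a generating set of $U(n,n)(\mathcal{O}_L)$, and then to identify the decisive case with pluriharmonicity using the structure theory of Kashiwara--Vergne.

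\textbf{Step 1 (reduction to generators).} First I would record that \eqref{eq comm rel} is equivalent to the assertion that the operator $\mathcal{D}_Q$, which applies $Q(\partial/\partial Z_1,\dots,\partial/\partial Z_r)$ and then restricts to the diagonal $Z_1=\dots=Z_r=Z$, carries the weight-$(k_1,\dots,k_r)$ slash action on holomorphic functions of $r$ matrix variables to the weight-$(k+2v)$ slash action on functions of one variable. Since $\det(C_\gamma Z+D_\gamma)$ satisfies the usual cocycle relation and the chain rule applies, the set of $\gamma$ for which \eqref{eq comm rel} holds for every holomorphic $F$ is a subgroup of $U(n,n)(\C)$. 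It therefore suffices to verify the equivalence on the generators of types (I), (II), (III) of Definition \ref{def: Unn} (with $A\in\mathrm{GL}(n,\mathcal{O}_L)$ and $B$ an integral Hermitian matrix), and dually to extract from these the complete list of constraints on $Q$.

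\textbf{Step 2 (polynomial identities; types (II) and (I)).} For each generator I would test against the exponentials $F(W_1,\dots,W_r)=\exp\!\big(\sum_i\operatorname{tr}(S_iW_i)\big)$, for which $\partial/\partial W_i$ acts as multiplication by $S_i$; parameter-derivatives of these span enough to detect any local differential identity, so nothing is lost. For type (II) the resulting identity is the chain rule for a translation and holds for every $Q$, imposing no condition. For type (I) it collapses to $Q(A^*S_1A,\dots,A^*S_rA)=\overline{\det A}^{\,2v}\,Q(S_1,\dots,S_r)$; when $Q\in\mathcal{Q}_{n,v}(r)$ the left-hand side equals $\overline{\det A}^{\,v}(\det A)^{v}Q$, so the identity holds exactly when $(\det A/\overline{\det A})^{v}=1$. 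As $\det$ maps $\mathrm{GL}(n,\mathcal{O}_L)$ onto $\mathcal{O}_L^\times$ and $u\mapsto u/\bar u$ maps $\mathcal{O}_L^\times$ onto the cyclic subgroup of $\mu(L)$ of order $\tfrac{|\mu(L)|}{2}[\mathcal{O}_L^\times:\mu(L)\mathcal{O}_{L^+}^\times]$, this holds for all admissible $A$ precisely under the stated divisibility hypothesis on $v$. In the converse direction the same identity, extended to all $A$ by Zariski density when $n\ge 2$ and supplemented by the inversion analysis below, forces $Q\in\mathcal{Q}_{n,v}(r)$.

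\textbf{Step 3 (the inversion generator — the crux).} For type (III) the identity becomes
\begin{multline*}
\Big[Q\big(\tfrac{\partial}{\partial Z_1},\dots,\tfrac{\partial}{\partial Z_r}\big)\prod_{i=1}^{r}\det(Z_i)^{-k_i}e^{-\operatorname{tr}(S_iZ_i^{-1})}\Big]_{Z_1=\dots=Z_r=Z} \\ = \det(Z)^{-(k+2v)}\,Q(S_1,\dots,S_r)\,e^{-\operatorname{tr}((\sum_i S_i)Z^{-1})},
\end{multline*}
required for all $S_i\in M_{n,n}(\C)$. Here I would pass to the polynomial $P$ associated to $Q$ by writing $R_i=X_i\,{}^tY_i$ with $X_i,Y_i\in M_{n,k_i}(\C)$ (possible generically since $k_i\ge n$). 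Using a determinant-twisted analogue of the Cayley identity to propagate $Q(\partial/\partial Z_i)$ through the factors $\det(Z_i)^{-k_i}$, the displayed identity is reinterpreted as a condition on $P$, and the Kashiwara--Vergne description of the pluriharmonic polynomials on $M_{n,k}(\C)\times M_{n,k}(\C)$ — the multiplicity-one decomposition under $\mathrm{GL}(n,\C)\times\mathrm{GL}(n,\C)\times\mathrm{GL}(k,\C)$ and the location of its $K$-invariant, determinant-$v$ isotypic piece — then shows the identity holds for all $S_i$ if and only if $P$ is pluriharmonic, i.e.\ $Q\in\mathcal{H}_{n,v}(k_1,\dots,k_r)$. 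Combining this with Step 2 gives the claimed equivalence.

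\textbf{Main obstacle.} The hard part is Step 3: producing the explicit formula for how $Q(\partial/\partial Z)$ interacts with the automorphy factor $\det(CZ+D)$ under $Z\mapsto -Z^{-1}$, and then matching the resulting constraint on $P$ with pluriharmonicity through the Kashiwara--Vergne decomposition. This is also what supplies the part of the homogeneity of $Q$ not already forced by generator (I); the remaining steps are essentially formal once this identity is in hand.
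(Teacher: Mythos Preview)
Your architecture --- reduce to generators, types (II) and (I) impose little, type (III) is the crux --- matches the paper exactly. But Step~3 as written is a gap rather than a proof: you have correctly located the obstacle and then described it rather than overcome it. The paper does \emph{not} use a Cayley-type identity, nor does it invoke the abstract multiplicity-one decomposition of Kashiwara--Vergne. What it uses is their concrete Gaussian--Fourier integral formula (their Lemma~4.2): for pluriharmonic $P$,
\[
\int_{M_{n,k}(\C)} e^{i\Tr UY^{\ast}}e^{i\Tr U^{\ast}Y}e^{i\Tr U^{\ast}ZU}P(U,\overline{U})\,|dU|^{2}
=(-2\pi i)^{nk}\det(-Z)^{-k}e^{i\Tr Y^{\ast}(-Z)^{-1}Y}P(Y,\overline{Y}).
\]
In the forward direction the paper first reduces to the single test function $F_{0}(Z)=\exp(i\Tr Y^{\ast}ZY)$ (arguing that the $Q_{M}(\partial/\partial Z)F_{0}$ are linearly independent, so checking the relation for $F_{0}$ suffices), and then applies the integral formula twice --- once with $P\equiv 1$ to rewrite $\det(-Z)^{-k}F_{0}(-Z^{-1})$ as an integral, once with $P$ associated to $Q$ after differentiating under the integral sign --- which yields the type-(III) commutation directly. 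In the converse direction the same $F_{0}$ and the assumed type-(III) relation produce an integral identity which, after specializing $Z=i\alpha^{2}$ and changing variables, becomes the Gaussian mean-value identity on $\R^{2N}$ that \emph{characterizes} harmonicity; together with the homogeneity extracted in Step~2 and Remark~\ref{rem: harmonic}, this forces pluriharmonicity. Your ``determinant-twisted Cayley identity'' route would require computing how an \emph{arbitrary} $Q(\partial/\partial Z)$ passes through $\det(Z)^{-k}e^{-\Tr(SZ^{-1})}$, which is precisely the computation the integral formula bypasses; and the KV decomposition by itself does not hand you this identity --- the integral formula \emph{is} the representation-theoretic content made analytic.

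One smaller point on Step~2: your Zariski-density remark needs care, since $\mathrm{GL}(n,\mathcal{O}_{L})$ is not Zariski dense in $\mathrm{GL}(n,\C)$ (its determinant lands in the discrete group $\mathcal{O}_{L}^{\times}$). The paper instead reads the relation $Q(A^{\ast}YY^{\ast}A)=\det(A^{\ast})^{2v}Q(YY^{\ast})$ off from the test function $F_{0}$ and then passes to the associated $P(X,Y)=Q(X\,{}^{t}Y)$ to obtain homogeneity; you should argue along those lines rather than via density in $\mathrm{GL}(n,\C)$.
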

	
	\begin{remark}
		It was remarked to the author by T. Ibukiyama that the condition on the degree $v$ is essentially the condition of a real Lie group for the unitary group in question.
	\end{remark}
	
	\begin{corollary}[Differential Operators on Hermitian Modular Forms] \label{coro}
		If, in the setting of the theorem, we have $F(Z_{1},\dots,Z_{r})=F_{1}(Z_{1})\dots F_{r}(Z_{r})$ with each $F_{j}$ ($j=1,\dots,r$) a scalar-valued Hermitian modular form of weight $k_{i}$ of some level $\Gamma$ then  applying the differential operator associated to some polynomial $Q$ gives a Hermitian modular form of weight $2v+k$ and level $\Gamma$ if  $Q\in\mathcal{H}_{n,v}(k_{1},\dots,k_{r})$. \end{corollary}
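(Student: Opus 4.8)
The plan is to obtain Corollary~\ref{coro} as a direct specialization of Theorem~\ref{thm}, taking $F$ to be a pure tensor. So suppose $Q\in\mathcal{H}_{n,v}(k_{1},\dots,k_{r})$, and put $F(Z_{1},\dots,Z_{r})=F_{1}(Z_{1})\cdots F_{r}(Z_{r})$ with each $F_{j}$ a scalar-valued Hermitian modular form of weight $k_{j}$ and level $\Gamma$. Since $\Gamma\subseteq U(n,n)(\mathcal{O}_{L})$, equation \eqref{eq comm rel} applies for every $\gamma=\begin{pmatrix}A&B\\C&D\end{pmatrix}\in\Gamma$; moreover, by Definition~\ref{def: hmf} each $F_{j}$ satisfies $F_{j}(\gamma Z)=\det(CZ+D)^{k_{j}}F_{j}(Z)$. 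Hence the automorphy factors cancel term by term:
\[
F(\gamma Z_{1},\dots,\gamma Z_{r})\prod_{i=1}^{r}\det(CZ_{i}+D)^{-k_{i}}=\prod_{j=1}^{r}\Big(F_{j}(\gamma Z_{j})\det(CZ_{j}+D)^{-k_{j}}\Big)=F(Z_{1},\dots,Z_{r}),
\]
so the left-hand side of \eqref{eq comm rel} collapses to $\big(\mathcal{D}_{Q}F\big)(Z):=Q\!\left(\tfrac{\partial}{\partial Z_{1}},\dots,\tfrac{\partial}{\partial Z_{r}}\right)\!\big(F_{1}(Z_{1})\cdots F_{r}(Z_{r})\big)\big|_{Z_{1}=\dots=Z_{r}=Z}$, which is exactly the multilinear differential operator associated to $Q$ applied to the tuple $(F_{1},\dots,F_{r})$.

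Next I would unwind the right-hand side of \eqref{eq comm rel}: reading $Q(\partial/\partial\gamma Z)\big(F(\gamma Z,\dots,\gamma Z)\big)$ as the function $W\mapsto(\mathcal{D}_{Q}F)(W)$ precomposed with the map $Z\mapsto\gamma Z$, the right-hand side becomes $\det(CZ+D)^{-(k+2v)}(\mathcal{D}_{Q}F)(\gamma Z)$. Therefore Theorem~\ref{thm} yields, for every $\gamma\in\Gamma$,
\[
(\mathcal{D}_{Q}F)(Z)=\det(CZ+D)^{-(2v+k)}(\mathcal{D}_{Q}F)(\gamma Z),
\]
which is precisely the weight-$(2v+k)$ transformation law of Definition~\ref{def: hmf} for the level $\Gamma$. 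Holomorphy of $\mathcal{D}_{Q}F$ is automatic, since it is a finite $\C$-linear combination of products of holomorphic partial derivatives (in the entries of the $Z_{i}$) of the holomorphic functions $F_{j}$, restricted to the diagonal. Hence $\mathcal{D}_{Q}F$ is a scalar-valued Hermitian modular form of weight $2v+k$ and level $\Gamma$, as claimed.

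In this argument all the substantive work is already contained in Theorem~\ref{thm} (and inherits its hypotheses on $k_{1},\dots,k_{r}$, $v$, and $L$); the corollary itself is essentially bookkeeping. The only points that need care are the two just used: verifying that the product $\prod_{i}\det(CZ_{i}+D)^{-k_{i}}$ exactly annihilates the automorphy factors produced by the $F_{j}(\gamma Z_{j})$ (the reason the weights must be additive), and correctly interpreting the notation $Q(\partial/\partial\gamma Z)$ on the right of \eqref{eq comm rel} as differentiation in the transformed coordinate followed by substitution $W=\gamma Z$. Note also that, with Hermitian modular forms defined as in Definition~\ref{def: hmf}, there is no growth or cusp condition to check, so no invocation of Koecher's principle is needed; and while $\mathcal{D}_{Q}F$ may vanish identically for certain $Q$, this is harmless here, the question of nonvanishing being addressed by Theorem~B.
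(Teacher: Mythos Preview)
Your argument is correct and follows essentially the same route as the paper: both proofs specialize Theorem~\ref{thm} to $F=F_{1}\cdots F_{r}$, use the weight-$k_{j}$ transformation law of each $F_{j}$ to cancel the factors $\prod_{i}\det(CZ_{i}+D)^{-k_{i}}$ on the left of \eqref{eq comm rel}, and then read the right-hand side as $\det(CZ+D)^{-(k+2v)}(\mathcal{D}_{Q}F)(\gamma Z)$. Your added remarks on holomorphy and the interpretation of $Q(\partial/\partial\gamma Z)$ are fine but not strictly needed beyond what the paper does.
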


	\begin{proof}[Proof of Corollary \ref{coro}]
		This follows because, for such an $F$,
		\begin{equation*}
			\begin{split}
				& \left( Q \left( \frac{\partial}{\partial Z} \right) (F_{1}\dots F_{r})\right)(Z_{1},\dots,Z_{r})|_{Z_{1}=\dots=Z_{r}=Z}\\
				& \hspace*{3ex} =Q\left(\frac{\partial}{\partial Z}\right)\left(F_{1}(Z_{1})\dots F_{r}(Z_{r})\right)|_{Z_{1}=\dots=Z_{r}=Z} \\
				& \hspace*{3ex} =Q\left(\frac{\partial}{\partial Z}\right)\left(F_{1}(\gamma Z_{1})\dots F_{r}(\gamma Z_{r})\prod_{i=1}^{r}\text{det}(CZ_{i}+D)^{-k_{i}}\right)|_{Z_{1}=\dots=Z_{r}=Z} \\
				& \hspace*{3ex} = \text{det}(CZ+D)^{-(k+2v)}Q\left(\frac{\partial}{\partial \gamma Z}\right)\left(F_{1}(\gamma Z)\dots F_{r}(\gamma Z)\right) \\
				& \hspace*{3ex} = \text{det}(CZ+D)^{-(k+2v)}\left(Q\left(\frac{\partial}{\partial Z}\right)(F_{1}\dots F_{r})\right)(\gamma Z,\dots,\gamma Z),
			\end{split}
		\end{equation*}	
		where the penultimate equality is the relation (\ref{eq comm rel}). \end{proof}

	\begin{remark}
		The reverse implication for Corollary \ref{coro} is not quite true. One counterexample, pointed out by T. Ibukiyama, is that if we choose $k_{1}$ and $\Gamma$ such that the only Hermitian modular form of weight $k_{1}$ and level $\Gamma$ is the zero function, then the differential operator corresponding to any polynomial $Q$ will act on the zero function and return the zero function, which is of course a Hermitian modular form of every weight and level.
		
	\end{remark}
	
	\begin{proof}[Proof of Theorem \ref{thm}] We start by supposing $Q\in\mathcal{H}_{n,v}(k_{1},k_{2},\dots,k_{r})$. It is sufficient to show that the result holds for a collection of generators of $U(n,n)$ such as the one given in Definition \ref{def: Unn}.
		
		\noindent\textbf{Case I} We show the relation for $\gamma=\begin{pmatrix} A & 0\\ 0 & (A^{\ast})^{-1}\end{pmatrix}$ for $A\in\text{GL}(n,\C)$. First, note that 
		\[ \frac{\partial}{\partial Z}= A^{\ast}\frac{\partial}{\partial W}A, \]
		where $W=\gamma\cdot Z=AZA^{\ast}$. Furthermore, note that $A\in\text{GL}(n,\mathcal{O}_{L})$ and $\text{det}(A)\in\mathcal{O}_{L}^{\times}$. For $v$ as in the statement of the theorem, $\text{det}(A)^{v}$ and $\text{det}(A^{\ast})^{v}$ must lie inside $\mathcal{O}^{\times}_{L^{+}}\subset \R$ and thus 
		\[ \text{det}(A)^{v}\text{det}(A^{\ast})^{v}=\text{det}(A)^{2v}=\text{det}(A^{\ast})^{2v}.\]

		Then
		
		\begin{equation*}
			\begin{split}
				& Q\left(\frac{\partial}{\partial Z}\right)\left(F(AZ_{1}A^{\ast},\dots,A Z_{r}A^{\ast})\text{det}(A^{\ast})^{k}\right)|_{Z_{1}=\dots=Z_{r}=Z} \\
				& =\text{det}(A^{\ast})^{k}Q\left(\frac{\partial}{\partial Z}\right)\left(F(AZA^{\ast},\dots,A ZA^{\ast})\right) \hspace*{4ex} \text{\small{since $A$ and $A^{\ast}$ do not depend on $Z$}}\\
				& = \text{det}(A^{\ast})^{k}Q\left(A^{\ast}\frac{\partial}{\partial W}A\right)\left(F(AZA^{\ast},\dots,A ZA^{\ast})\right) \hspace*{4ex} \text{\small{by the equality above}}\\
				& =\text{det}(A^{\ast})^{k+v}\text{det}(A)^{v}Q\left(\frac{\partial}{\partial W}\right)\left(F(W,\dots,W)\right) \hspace*{4ex} \text{\small{since $Q$ is homogeneous of degree $v$}} \\
				& =\text{det}(A^{\ast})^{k+2v}Q\left(\frac{\partial}{\partial W}\right)\left(F(W,\dots,W)\right) \hspace*{4ex} \text{\small{by the observation above,}}
			\end{split}
		\end{equation*}
		
		and this is precisely the relation (\ref{eq comm rel}) in this case. \\
		
		\noindent\textbf{Case II} We consider $\gamma=\begin{pmatrix} 1_{n} & B\\ 0 & 1_{n}\end{pmatrix}$ with $B$ Hermitian, $B=B^{\ast}$. Then $W=\gamma\cdot Z=Z+B$ and $\frac{\partial}{\partial Z}=\frac{\partial}{\partial W}$.
		
		We have
		\begin{equation*}
			\begin{split}
				& Q\left(\frac{\partial}{\partial Z}\right)\left(F(Z_{1}+B,\dots, Z_{r}+B)\text{det}(1_{n})^{-k}\right)|_{Z_{1}=\dots=Z_{r}=Z} \\
				& = Q\left(\frac{\partial}{\partial Z}\right)\left(F(Z+B,\dots, Z+B)\right)\\
				& = Q\left(\frac{\partial}{\partial W}\right)\left(F(W,\dots, W)\right),
			\end{split}
		\end{equation*}
		and we are done.
		
		\noindent\textbf{Case III} We take $\gamma=\begin{pmatrix} 0 & -1_{n}\\ 1_{n} & 0\end{pmatrix}$. So $W=\gamma\cdot Z=-Z^{-1}$ and
		
		\[ \frac{\partial}{\partial Z}= (-Z^{-1})\frac{\partial}{\partial W}(-Z^{-1}). \]
		
		This case requires a little more work. We argue in a similar fashion to the analagous Theorem 2 of Ibukiyama \cite{Ibu99}. We see that there exist functions $G_{M}(Z)$, independent of $F$, such that
		\[ Q\left(\frac{\partial}{\partial Z}\right)(F(W,\dots,W)\text{det}(-Z)^{-k})=\sum_{\abs{M}\leq\text{deg}(Q)}G_{M}(Z)\left(Q_{M}\left(\frac{\partial}{\partial Z}\right) F\right)(W), \]
		where the sum runs over $n\times n$ matrices $M=(m_{s,t})$ with non-negative integer entries satisfying 
		\[\abs{M}:=\sum_{1\leq s,t,\leq n}m_{s,t}\leq \text{deg}(Q),\] and $Q_{M}$ is the monomial associated to $M$, i.e.  \[ Q_{M}\left(\frac{\partial}{\partial Z}\right) = \prod_{1\leq s,t\leq n}\left(\frac{\partial}{\partial Z}\right)_{s,t}^{m_{s,t}} = \prod_{1\leq s,t\leq n}\frac{\partial^{m_{s,t}}}{\partial z_{t,s}^{m_{s,t}}}. \]
		
		Therefore, if we can verify that the commutation relation holds for a particular holomorphic function $F$ for which $Q_{M}\left(\frac{\partial}{\partial Z}\right)(F)$ are linearly independent at any point $Z$ (or linearly independent over the ring of holomorphic functions), then the commutation relation holds for all holomorphic functions.
		
		We take $F_{0}(Z):=\exp\left(i\Tr (Y^{\ast}ZY)\right)$ where $Y$ is an $n\times n$ matrix of independent variables $y_{s,t}$. Then
		
		\[ Q_{M}\left(\frac{\partial}{\partial Z}\right)(F_{0})(Z)= i^{\abs{M}}Q_{M}(YY^{\ast})F_{0}(Z).\]
		The $y_{s,t}$ are independent variables, and each $k_{i}\geq n$ so the components of $YY^{\ast}$ are algebraically independent. So, the values of the functions $Q_{M}\left(\frac{\partial}{\partial Z}\right)(F_{0})$ are linearly independent at any $Z$. So, we prove the commutation relation for $\gamma=\begin{pmatrix} 0 & -1_{n}\\ 1_{n} & 0\end{pmatrix}$ and $F_{0}$. To do so, we use the following technical lemma, a more general version of which was proved by Kashiwara--Vergne: \phantom{\qedhere}
	\end{proof}

	\begin{lemma}{4.2 p.39 \cite{KasVer}} \label{KV4.2}
		
		Let $P=P(X,Y)$ be a pluriharmonic polynomial. Then we have the following equality:
		\[ \int_{M_{n,k}(\C)} e^{i\Tr UY^{\ast}}e^{i\Tr U^{\ast}Y}e^{i\Tr U^{\ast}ZU}P(U,\overline{U})\abs{dU}^{2}=(-2\pi i)^{nk}\text{det}(-Z)^{-k}e^{i\Tr Y^{\ast}(-Z)^{-1}Y}P(Y,\overline{Y}),\] 
		where $\abs{dU}^{2}=2^{k}d(\text{Re}U_{1})d(\text{Im}U_{1})\dots d(\text{Re}U_{k})d(\text{Im}U_{k})$ with $U_{j}$ the $j$-th column of $U$. \end{lemma}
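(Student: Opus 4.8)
The plan is to establish the identity first for $P\equiv 1$, where it is a matrix Gaussian integral, and then to obtain the general case by inserting $P(U,\overline{U})$ into the integrand as the result of applying a constant-coefficient differential operator in the entries of $Y$ to the $P\equiv 1$ identity.

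\emph{The case $P\equiv 1$.} Writing $Z=S+iH$ with $S,H$ Hermitian and $H>0$ (the defining condition of $\mathbb{H}_{n}$), the factor $e^{i\Tr U^{*}ZU}$ carries the Gaussian weight $e^{-\Tr U^{*}HU}$, so the integral converges absolutely and depends holomorphically on $Z$ over $\mathbb{H}_{n}$. Since $\{\,iH:H=H^{*}\,\}$ is a totally real submanifold of $M_{n,n}(\C)$ of half the real dimension, by uniqueness of analytic continuation it suffices to prove the identity for $Z=iH$, $H>0$. For such $Z$ the substitution $U\mapsto H^{-1/2}U$ scales $\abs{dU}^{2}$ by $\det(H)^{-k}$ and replaces $Y$ by $H^{-1/2}Y$, reducing to $Z=i1_{n}$; the remaining integral $\int_{M_{n,k}(\C)}e^{-\Tr U^{*}U+i\Tr(UY^{*}+U^{*}Y)}\abs{dU}^{2}$ factors over the $k$ columns of $U$ into $n$-dimensional complex Gaussians, each evaluated by completing the square, i.e.\ a shift of the $\R^{2n}$-contour by a purely imaginary vector, which is legitimate by Cauchy's theorem since the integrand is entire with Gaussian decay in a horizontal strip. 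Reassembling and tracking constants yields the asserted right-hand side.

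\emph{General pluriharmonic $P$.} Since $\partial/\partial\overline{Y_{ab}}$ applied to $e^{i\Tr UY^{*}}$ brings down $iU_{ab}$ (and $e^{i\Tr U^{*}Y}$ does not involve $\overline{Y}$), while $\partial/\partial Y_{ab}$ applied to $e^{i\Tr U^{*}Y}$ brings down $i\overline{U_{ab}}$, the operator $P\!\left(\tfrac{1}{i}\tfrac{\partial}{\partial\overline{Y}},\,\tfrac{1}{i}\tfrac{\partial}{\partial Y}\right)$ (the matrix-of-derivatives notation read entrywise, with the entries of the first argument of $P$ replaced by $\tfrac{1}{i}\partial/\partial\overline{Y}$ and those of the second by $\tfrac{1}{i}\partial/\partial Y$) inserts precisely the factor $P(U,\overline{U})$ into the integrand. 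This operator commutes with the integral (Gaussian decay), so the left-hand side equals that operator applied to $(-2\pi i)^{nk}\det(-Z)^{-k}e^{i\Tr Y^{*}WY}$, where $W:=(-Z)^{-1}$. A Wick-type (``Mehler formula'') expansion gives
\[
P\!\left(\tfrac{1}{i}\tfrac{\partial}{\partial\overline{Y}},\,\tfrac{1}{i}\tfrac{\partial}{\partial Y}\right)\!\left[e^{i\Tr Y^{*}WY}\right]=e^{i\Tr Y^{*}WY}\cdot\widetilde{P}(WY,\,{}^{t}W\overline{Y}),\qquad\widetilde{P}:=\exp\!\left(\tfrac{1}{i}\sum_{s,t}W_{st}\,\Delta_{s,t}(X,Y)\right)\!P,
\]
where each contraction of a $\partial/\partial\overline{Y}$ in row $a$ against a $\partial/\partial Y$ in the same column and row $c$ contributes the ``propagator'' $\tfrac{1}{i}W_{ac}$ and effects one application of $\Delta_{a,c}(X,Y)$. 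As $P$ is pluriharmonic, $\Delta_{s,t}(X,Y)P=0$ for all $s,t$, so $\widetilde{P}=P$ and the right-hand side collapses to $e^{i\Tr Y^{*}WY}\,P(WY,\,{}^{t}W\overline{Y})$; the homogeneity of $P$ then brings $P(WY,\,{}^{t}W\overline{Y})$ to a power of $\det(-Z)$ times $P(Y,\overline{Y})$, and collecting constants gives the identity.

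The main obstacle is the Wick/Mehler identity displayed above: verifying that the only surviving correction terms are genuine applications of the operators $\Delta_{s,t}(X,Y)$ to $P$, and that their generating function is exactly $\exp\!\left(\tfrac{1}{i}\sum_{s,t}W_{st}\Delta_{s,t}\right)$, which comes down to careful bookkeeping of how the $\overline{Y}$- and $Y$-derivatives pair up when acting on the Gaussian. A secondary, routine difficulty is the analytic fine print (absolute convergence, differentiation under the integral sign, the contour deformation used to complete the square, and the appeal to analytic continuation in $Z$). We note that this is a special case of Lemma~4.2 of Kashiwara--Vergne \cite{KasVer}, proved there within their systematic study of pluriharmonic polynomials and the Weil representation, so one may alternatively just invoke it.
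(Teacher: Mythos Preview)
The paper does not supply its own proof of this lemma; it is quoted from Kashiwara--Vergne and used as a black box, so there is no argument in the paper to compare against. Your sketch is mostly reasonable up to the Wick computation, but the final step contains a genuine error.

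Your Mehler-type identity is correct: applying $P\!\left(\tfrac{1}{i}\partial_{\overline{Y}},\tfrac{1}{i}\partial_{Y}\right)$ to $e^{i\Tr Y^{*}WY}$ does yield $e^{i\Tr Y^{*}WY}\,\widetilde{P}(WY,{}^{t}W\overline{Y})$ with $\widetilde{P}=\exp\bigl(\tfrac{1}{i}\sum_{s,t}W_{st}\Delta_{s,t}\bigr)P$, and pluriharmonicity kills the contraction terms, leaving $P(WY,{}^{t}W\overline{Y})$. The problem is what comes next: you invoke ``the homogeneity of $P$'' to convert $P(WY,{}^{t}W\overline{Y})$ into a power of $\det(-Z)$ times $P(Y,\overline{Y})$. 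But the lemma assumes only that $P$ is pluriharmonic, not that it is homogeneous in the sense of Definition~\ref{def: homogeneous}; for instance $P(X,Y)=X_{11}$ is pluriharmonic and is not homogeneous. Worse, even if one restricted to homogeneous $P$ of degree $v$, the homogeneity relation would produce an extra factor $\det(W)^{v}\det({}^{t}W)^{v}=\det(-Z)^{-2v}$, which is absent from the stated right-hand side. So this step cannot recover the asserted identity.

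The underlying issue is structural: differentiating the $P\equiv 1$ identity in $Y$ inserts $P(U,\overline{U})$ on the left but naturally produces $P(WY,{}^{t}W\overline{Y})$ on the right, with $W=(-Z)^{-1}$, and no algebraic manipulation collapses this to $P(Y,\overline{Y})$. The route that actually works (as in Kashiwara--Vergne, and echoed in the paper's own Lemma~\ref{intlemma}) is to reduce by a change of variables to the Hecke--Bochner formula for the Fourier transform of a harmonic polynomial times a Gaussian on $\R^{2nk}$, which directly gives the correct $Y$-dependence; pluriharmonicity then enters not to kill Wick contractions but as the hypothesis (via Remark~\ref{rem: harmonic}) guaranteeing harmonicity after the reduction to $Z=i\alpha^{2}$.
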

	
	\vspace*{3ex} 
	
	Now, applying this to the situation at hand:
	\begin{align*}
		& Q\left(\frac{\partial}{\partial Z}\right)(F_{0}(-Z^{-1})\text{det}(-Z)^{-k}) \\
		& =Q\left(\frac{\partial}{\partial Z}\right)\left( (-2\pi i)^{-nk}\int e^{i\Tr UY^{\ast}}e^{i\Tr U^{\ast}Y}e^{i\Tr U^{\ast}ZU}\abs{dU}^{2}\right) \hspace*{1em}\text{\small{by Lemma \ref{KV4.2} with $P$ trivial}}\\
		& = (-2\pi i)^{-nk}\int e^{i\Tr UY^{\ast}}e^{i\Tr U^{\ast}Y}Q\left(\frac{\partial}{\partial Z}\right)\left(e^{i\Tr U^{\ast}ZU}\right)\abs{dU}^{2} \hspace*{1em} \text{\small{by Leibniz integral rule}} \\
		& = (-2\pi i)^{-nk}\int e^{i\Tr UY^{\ast}}e^{i\Tr U^{\ast}Y}e^{i\Tr U^{\ast}ZU}i^{\text{deg}(Q)}Q(UU^{\ast})\abs{dU}^{2} \\
		& =(-2\pi i)^{-nk}i^{\text{deg}(Q)}\int e^{i\Tr UY^{\ast}}e^{i\Tr U^{\ast}Y}e^{i\Tr U^{\ast}ZU}P(U,\overline{U})\abs{dU}^{2} \hspace*{1em}\text{\small{where $P$ assoc. to $Q$}}\\
		& =(-2\pi i)^{-nk}i^{\text{deg}(Q)}(-2\pi i)^{nk}\text{det}(-Z)^{-k}e^{i\Tr Y^{\ast}(-Z)^{-1}Y}P(Y,\overline{Y}) \hspace*{1em}\text{\small{by Lemma \ref{KV4.2}}}\\
		& =\text{det}(-Z)^{-k}i^{\text{deg}(Q)}Q(YY^{\ast})e^{i\Tr Y^{\ast}(-Z)^{-1}Y} \\
		& =\text{det}(-Z)^{-k} Q\left(\frac{\partial}{\partial Z}\right)(F_{0}(-Z^{-1})) \\
		& =\text{det}(-Z)^{-k} Q\left((-Z^{-1})\frac{\partial}{\partial W}(-Z^{-1})\right)(F_{0}(-Z^{-1})) \\
		& =\text{det}(-Z)^{-(k+2v)} Q\left(\frac{\partial}{\partial W}\right)(F_{0}(W)) \hspace*{1em}\text{by the homogeneity of $Q$, and we are done.} 
	\end{align*}
	
	\vspace*{1em}
	
	So, if the polynomial $Q$ is a member of the space $\mathcal{H}_{n,v}(k_{1},\dots,k_{r})$ then equation (\ref{eq comm rel}) is satisfied for every element of our unitary group. Conversely, suppose that $Q$ is a polynomial on $M^{r}_{n,n}$ satisfying equation  (\ref{eq comm rel}), restated below:
	
	\begin{multline*} Q\left(\frac{\partial}{\partial Z}\right)\left(F(\gamma Z_{1},\dots,\gamma Z_{r})\prod_{i=1}^{r}\text{det}(CZ_{i}+D)^{-k_{i}}\right)|_{Z_{1}=\dots=Z_{r}=Z} \\ =\text{det}(CZ+D)^{-(k+2v)}Q\left(\frac{\partial}{\partial \gamma Z}\right)\left(F(\gamma Z,\dots,\gamma Z)\right) \hspace*{3ex} (\ref{eq comm rel}) \end{multline*} 
	
	for every matrix $\gamma=\begin{pmatrix} A&B\\C&D\end{pmatrix}\in U(n,n)$. We again consider $F_{0}(Z)=\text{exp}(i\Tr Y^{\ast}ZY)$. We start with the homogeneity property, and consider the relation (\ref{eq comm rel}) for the polynomial $Q$ in the case of a matrix $\gamma=\begin{pmatrix} A& 0 \\0 & (A^{\ast})^{-1}\end{pmatrix}$ for any $A\in\text{GL}(n,\mathcal{O}_{L})$:
	
	\[ Q\left(\frac{\partial}{\partial Z}\right)\left(\text{exp}(i\Tr Y^{\ast}AZA^{\ast}Y)\text{det}(A^{\ast})^{k}\right) = \text{det}(A^{\ast})^{k+2v}  Q\left(\frac{\partial}{\partial (AZA^{\ast})}\right)\left(\text{exp}(i\Tr Y^{\ast}AZA^{\ast}Y)\right). \]
	
	Computing each side, this becomes
	
	\[ \text{det}(A^{\ast})^{k}Q(A^{\ast}YY^{\ast}A)\text{exp}(i\Tr Y^{\ast}AZA^{\ast}Y) =\text{det}(A^{\ast})^{k+2v}Q(YY^{\ast})\text{exp}(i\Tr Y^{\ast}AZA^{\ast}Y).\]
	
	Since the matrix $Y$ is a matrix of independent variables and this holds for every value of $Z$ and every $A\in\text{GL}(n,\mathcal{O}_{L})$, we must have that
	
	\[ Q(A^{\ast}YY^{\ast}A)=\text{det}(A^{\ast})^{2v}Q(YY^{\ast}).  \]
	
	In particular, for $A\in\text{GL}(n,\mathcal{O}_{L})$ and $v$ as in the statement of Theorem \ref{thm}, 
	
	\[Q(A^{\ast}YY^{\ast}A)=\text{det}(A^{\ast})^{v}\text{det}(A)^{v}Q(YY^{\ast}).\]
	
	Since the relationship is for $Q(YY^{\ast})$ a polynomial in the entries of $YY^{\ast}$, we can find a unique polynomial $P$ with $P(Y,\overline{Y})=Q(YY^{\ast})$, and more generally $P(X,Y)=Q(X{}^{t}Y)$. So far we've shown that this polynomial $P$ is homogeneous (of degree $v$). To show that the original polynomial $Q$ is indeed associated to an \underline{harmonic} polynomial $P$, we need the following lemma: \\

	%To show that $Q$ is harmonic (more precisely, $Q$ is associated to a harmonic polynomial $P$) we use the commutation relation satisfied by $Q$ and $\gamma=\begin{pmatrix} 0 & -1_{n}\\1_{n} & 0\end{pmatrix}$. We'll need the following lemma.

	\begin{lemma} \label{intlemma} Given a homogeneous polynomial $P(X,Y)$ in the entries of $X,Y\in M_{n,k}(\C)$ which we identify with $\C^{N}$ ($N=nk$) which satisfies
		
		\begin{equation} \label{eq intlemma} \int_{\C^{N}}e^{-i\langle w,z\rangle}e^{-i\langle z,w\rangle}e^{-\langle w,w\rangle}P(w,\overline{w})\abs{dw}^{2} = (2\pi)^{N}e^{-\langle z,z\rangle}P(-iz,-i\overline{z}) \end{equation}
		
		for every $z\in\C^{N}$, $P$ must be harmonic (in the sense of my definition above). Here, $\langle,\rangle$ is the standard Hermitian form on $\C^{N}$ (which may also be written $\langle w,z\rangle=\Tr (wz^{\ast})$).\end{lemma}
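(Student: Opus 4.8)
The plan is to read the left-hand side of (\ref{eq intlemma}) as a Gaussian-twisted Fourier transform on $\C^{N}\cong\R^{2N}$ and then to apply the Bochner--Hecke identity for spherical harmonics, of which Lemma \ref{KV4.2} (specialized to a round Gaussian, i.e.\ $Z$ a multiple of $1_{n}$) is the harmonic case. Write $\Phi(P)(z)$ for the left-hand side of (\ref{eq intlemma}) and identify $\C^{N}$ with $\R^{2N}$ via $w\mapsto\xi=(\operatorname{Re}w,\operatorname{Im}w)$ and $z\mapsto\eta$. Since $\langle w,z\rangle+\langle z,w\rangle=2\,\xi\cdot\eta$ and $\langle w,w\rangle=|\xi|^{2}$, the integrand is $e^{-2i\,\xi\cdot\eta}e^{-|\xi|^{2}}p(\xi)$, where $p(\xi):=P(w,\overline{w})$ is $P$ regarded as a real polynomial on $\R^{2N}$; so $\Phi(P)(z)$ is, up to the normalization of $|dw|^{2}$, the Fourier transform of $e^{-|\xi|^{2}}p(\xi)$ evaluated at $2\eta$. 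Two standard observations will be used. First, by the Wirtinger calculus, one quarter of the Euclidean Laplacian applied to $w\mapsto P(w,\overline{w})$ equals $w\mapsto\big(\sum_{s}\Delta_{s,s}(X,Y)P\big)(w,\overline{w})$; since $\{(X,\overline{X})\}$ is a uniqueness set for holomorphic polynomials, $P$ is harmonic in the sense of Definition \ref{def: harmonic} if and only if $p$ is harmonic on $\R^{2N}$. Second, the $\operatorname{GL}_{n}\times\operatorname{GL}_{n}$-invariant quadratic $q(X,Y)=\sum_{s,u}x_{s,u}y_{s,u}$ restricts on the diagonal $Y=\overline{X}$ to $|\xi|^{2}$.

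Next I would decompose. With $\Delta:=\sum_{s}\Delta_{s,s}$, the operators $\Delta$, multiplication by $q$, and the Euler operator span a copy of $\mathfrak{sl}_{2}$, so the homogeneous polynomial $P$ admits a harmonic decomposition $P=\sum_{j=0}^{J}q^{j}h_{j}$ with each $h_{j}$ harmonic and homogeneous; restricted to the diagonal this becomes the Euclidean harmonic decomposition $p=\sum_{j}|\xi|^{2j}h_{j}$, and $P$ is harmonic precisely when $h_{j}=0$ for all $j\ge 1$ — which is what we must show. Transforming the decomposition termwise — the Hecke identity says $\widehat{e^{-|\cdot|^{2}}h}(2\eta)$ is a non-zero multiple of $e^{-|\eta|^{2}}h(\eta)$ for a solid harmonic $h$, which handles $h_{0}$ and is the content of Lemma \ref{KV4.2} for pluriharmonic $P$, and repeated application of the Laplacian in the frequency variable brings down the factors $|\xi|^{2j}$ — one obtains
\[
\Phi(P)(z)\;=\;e^{-\langle z,z\rangle}\sum_{j=0}^{J}R_{j}\!\big(\langle z,z\rangle\big)\,h_{j}(z,\overline{z}),
\]
where $R_{j}$ is, up to a non-zero constant, the generalized Laguerre polynomial $L_{j}^{(\deg h_{j}+N-1)}$; in particular $R_{j}$ has degree exactly $j$ and all of its coefficients are non-zero.

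Finally I would compare with the right-hand side of (\ref{eq intlemma}). Since $P$ is homogeneous, $P(-iz,-i\overline{z})$ is homogeneous of degree $d:=\deg P$, so after cancelling $e^{-\langle z,z\rangle}$ the identity forces $\sum_{j}R_{j}(\langle z,z\rangle)h_{j}(z,\overline{z})$ to be homogeneous of degree $d$. Picking out its component of homogeneous degree $d-2$ and using that the Euclidean harmonic decomposition $\mathcal{P}_{d-2}(\R^{2N})=\bigoplus_{a}|\xi|^{2a}\mathcal{H}_{d-2-2a}$ is a direct sum, each solid harmonic $h_{j}$ with $j\ge 1$ must be annihilated by the degree-$(j-1)$ coefficient of $R_{j}$; since that coefficient is non-zero, $h_{j}=0$ for all $j\ge 1$, so $P=h_{0}$ is harmonic.

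The only genuinely substantial step is the displayed Bochner--Hecke formula, and within it the fact that $R_{j}$ is not a monomial when $j\ge 1$: it is exactly this feature that lets the integral identity detect the non-harmonic part of $P$. It can be established by the Laguerre recursion obtained from iterating $\Delta$ against the Gaussian, with the $j=0$ instance being Lemma \ref{KV4.2}. The remaining points — convergence, differentiation under the integral sign, and the bookkeeping in the identification $\C^{N}\cong\R^{2N}$ and the normalization of $|dw|^{2}$ — are routine and affect only the overall constant, not the conclusion.
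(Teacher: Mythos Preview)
Your argument is correct, but it takes a longer and structurally different route than the paper's. Both proofs begin the same way: identify $\C^{N}\cong\R^{2N}$, observe via the Wirtinger calculus that $P$ is harmonic in the sense of Definition~\ref{def: harmonic} if and only if $p(\xi)=P(w,\overline{w})$ is Euclidean-harmonic on $\R^{2N}$, and rewrite the integral (\ref{eq intlemma}) as the Gaussian-weighted Fourier identity $\int_{\R^{2N}}e^{-2i(x,y)}e^{-(x,x)}F(x)\,dx=\pi^{N}e^{-(y,y)}F(-iy)$ for $F=p$. At that point the paper simply \emph{cites} the classical fact that this identity, holding for all $y$, characterizes harmonic polynomials on $\R^{2N}$ (the ``mean value relationship''), and then passes back by analytic continuation. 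You instead \emph{prove} the needed implication from scratch: harmonically decompose $P=\sum_{j}q^{j}h_{j}$, transform each piece via Bochner--Hecke to obtain Laguerre polynomials $R_{j}$ in $\langle z,z\rangle$, and use homogeneity of $P(-iz,-i\overline{z})$ together with the non-vanishing of the subleading Laguerre coefficient to kill all $h_{j}$ with $j\ge 1$.

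Your approach is more self-contained and makes explicit \emph{why} the Gaussian identity detects non-harmonicity --- it is precisely because the Laguerre polynomials $R_{j}$ are not monomials for $j\ge 1$. The paper's approach is much shorter but treats the real-variable characterization as a black box. Two small remarks on your write-up: the reference to Lemma~\ref{KV4.2} is only morally apt, since that lemma is stated for \emph{pluriharmonic} $P$ whereas you need the Hecke identity for each Euclidean-harmonic $h_{j}$ --- you should cite the standard Hecke/Bochner formula on $\R^{2N}$ directly; and the harmonic decomposition $P=\sum_{j}q^{j}h_{j}$ with respect to $\big(\Delta,\,q=\sum x_{s,u}y_{s,u}\big)$ is valid because over $\C$ this pair is linearly equivalent to the Euclidean $\big(\Delta_{\R^{2N}},|x|^{2}\big)$, but it is worth saying so explicitly.
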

	
	\vspace*{1em}
	
	\begin{proof}[Proof of Lemma \ref{intlemma}] We identify $\C^{N}$ with $\R^{2N}$ and use the mean value relationship for harmonic (the usual definition of harmonic) polynomials on $\R^{2N}$
		
		\[ \int_{\R^{2N}}e^{-2i(x,y)}e^{-(x,x)}F(x)dx=\pi^{N}e^{-(y,y)}F(-iy), \]
		
		which holds for every $y\in\R^{2N}$ for a polynomial $F$ if and only if $F$ is harmonic on $\R^{2N}$. Here, $(,)$ is the standard inner product on $\R^{2N}$. Writing $w=x_{1}+ix_{2}$ and $z=y_{1}+iy_{2}$ with $x_{1},x_{2},y_{1},y_{2}\in\R^{N}$, $x=(x_{1},x_{2}),y=(y_{1},y_{2})\in\R^{2N}$, setting $F(x)=P(x_{1}+ix_{2},x_{1}-ix_{2})$, and recalling that $\abs{dw}^{2}=2^{N}dx_{1}dx_{2}=2^{N}dx$ it follows immediately that equation (\ref{eq intlemma}) holds if and only if $P(x_{1}+ix_{2},x_{1}-ix_{2})$ is harmonic as a function on $\R^{2N}$, i.e. $P(w,\overline{w})$ with $w\in\C^{N}=M_{n,k}(\C)$ satisfies
		
		\begin{equation} \label{eq intlemma2} \sum_{r=1}^{n}\sum_{s=1}^{k}\frac{\partial^{2}}{\partial w_{r,s}\partial \overline{w}r,s}P(w,\overline{w})=0.   \end{equation}
		
		Thinking of $P(w,\overline{w})=P(x_{1}+ix_{2},x_{1}-ix_{2})$ as an analytic function on $\R^{2N}\subset \C^{N}\times\C^{N}$ (embedded via $(x_{1},x_{2})\mapsto (x_{1}+ix_{2},x_{1}-ix_{2})$), we have an equality of real analytic functions in equation (\ref{eq intlemma}) which holds precisely when equation (\ref{eq intlemma2}) holds. We can extend equation (\ref{eq intlemma}) via analytic continuation to all of $\C^{N}\times\C^{N}$. Since the analytic continuation of $P(w,\overline{w})$ is of course $P(X,Y)$, equation (\ref{eq intlemma2}) becomes
		\[  \sum_{r=1}^{n}\sum_{s=1}^{k}\frac{\partial^{2}}{\partial x_{r,s}\partial y_{r,s}}P(X,Y)=0, \]
		which is precisely our definition \ref{def: harmonic} for $P$ to be harmonic.\end{proof}

	\begin{proof}[Returning to the proof of Theorem \ref{thm}] It remains to show that equation (\ref{eq intlemma}) holds for the polynomial $P$ associated to our differential operator. To do this, we use the relation (\ref{eq comm rel}) for $\gamma=\begin{pmatrix} 0 &-1_{n}\\1_{n} & 0\end{pmatrix}$ and the function $F_{0}(Z)=\text{exp}(i\Tr (Y^{\ast}ZY))$ as above. The commutation relation tells us that
		\[ Q\left(\frac{\partial}{\partial Z}\right) \left(\text{det}(-Z)^{-k}F_{0}(-Z^{-1})\right)=\text{det}(-Z)^{-(k+2v)}Q\left(\frac{\partial}{\partial\gamma Z}\right)\left( F_{0}(-Z^{-1})\right).\]
		Just as in Case III above, the left-hand side is equal to
		\[(-2\pi i)^{-nk}i^{\text{deg}(Q)}\int_{M_{n,k}(\C)} e^{i\Tr UY^{\ast}}e^{i\Tr U^{\ast}Y}e^{i\Tr U^{\ast}ZU}P(U,\overline{U})\abs{dU}^{2}. \]
		For the right-hand side, using the homogeneity of $Q$ and computing we find
		
		\begin{equation*}
			\begin{split}
				& \text{det}(-Z)^{-(k+2v)}Q\left(\frac{\partial}{\partial\gamma Z}\right)\left( F_{0}(-Z^{-1})\right)\\
				& =\text{det}(-Z)^{-k}Q\left(\frac{\partial}{\partial Z}\right)\left( F_{0}(-Z^{-1})\right)\\
				& =\text{det}(-Z)^{-k} i^{\text{deg}(Q)}F_{0}(-Z^{-1})Q((-Z^{-1})YY^{\ast}(-Z^{-1})).
			\end{split}
		\end{equation*}
		
		So
		\begin{multline*} (-2\pi i)^{-nk}\int_{M_{n,k}(\C)} e^{i\Tr UY^{\ast}}e^{i\Tr U^{\ast}Y}e^{i\Tr U^{\ast}ZU}P(U,\overline{U})\abs{dU}^{2} 
			\\ 
			=\text{det}(-Z)^{-k} e^{i\Tr (Y^{\ast}(-Z^{-1})Y)}Q((-Z^{-1})YY^{\ast}(-Z^{-1})). \end{multline*}
		
		Since this holds for any value of $Z$, let's take $Z=i\alpha^{2}$ for a positive definite Hermitian matrix $\alpha$. Then we can write the above as follows:
		
		\begin{multline*}
			(-2\pi i)^{-nk}\int_{M_{n,k}(\C)} e^{i\Tr UY^{\ast}}e^{i\Tr U^{\ast}Y}e^{-\Tr \left((\alpha U)^{\ast}(\alpha U)\right)}P(U,\overline{U})\abs{dU}^{2}\\
			=(-i)^{-nk}\text{det}(-\alpha)^{-2k} e^{-\Tr \left((\alpha^{-1} Y)^{\ast}(\alpha^{-1} Y)\right)}Q(i\alpha^{-2}YY^{\ast}i\alpha^{-2}). 
		\end{multline*}
		
		Re-arranging we have
		
		\begin{multline*}
			\int_{M_{n,k}(\C)} e^{i\Tr UY^{\ast}}e^{i\Tr U^{\ast}Y}e^{-\Tr \left((\alpha U)^{\ast}(\alpha U)\right)}P(U,\overline{U})\abs{dU}^{2} \\
			=(2\pi)^{nk}\text{det}(-\alpha)^{-2k} e^{-\Tr \left((\alpha^{-1} Y)^{\ast}(\alpha^{-1} Y)\right)}P\left(-i\alpha^{-1}(\alpha^{-1}Y),-i\hspace*{1ex}\overline{\alpha^{-1}(\alpha^{-1}Y)}\right). 
		\end{multline*}
		
		Finally, we can change variables to $W=-\alpha U$ and $V=\alpha^{-1}Y$, and since $\abs{d U}^{2}=\text{det}(-\alpha)^{-2k}\abs{d W}^{2}$ we have
		\begin{multline*}
			\int_{M_{n,k}(\C)} e^{-i\Tr WV^{\ast}}e^{-i\Tr W^{\ast}V}e^{-\Tr \left(W^{\ast}W\right)}P(\alpha^{-1}W,\overline{\alpha^{-1}W})\abs{dW}^{2} \\ =(2\pi)^{nk}e^{-\Tr \left(V^{\ast}V\right)}P\left(-i\alpha^{-1}V,-i\hspace*{1ex}\overline{\alpha^{-1}V}\right). 
		\end{multline*}
		
		So by Lemma \ref{intlemma}, $P(\alpha^{-1}X,\alpha^{-1}Y)$ satisfies (\ref{eq intlemma}) and is thus harmonic. We already showed $P$ was homogeneous of degree $v$, so as $\alpha$ is Hermitian it follows that $P(X,Y)$ is harmonic, and indeed pluriharmonic by Remark \ref{rem: harmonic}, concluding the proof of Theorem \ref{thm}. \end{proof}

	\newpage 
	
	\section{Explicit Description of Bilinear Differential Operators}

	In this section we shall state and prove Theorem \nameref{thm b}. That is, we shall explicitly describe differential operators in the case $r=2$, i.e. the space $\mathcal{H}_{n,v}(k_{1},k_{2})$, for a fixed CM field $L/L^{+}$ and $k_{1},k_{2},n,v$ as in the statement of Theorem \ref{thm}. That is, we describe the space of polynomials $P(X,Y)$ on entries $X,Y\in M_{n,k}$ (where $k=k_{1}+k_{2}$) satisfying the following three conditions:
	\[ (\ast) \hspace*{5ex} P(AX,BY)=\text{det}(A)^{v}\text{det}(B)^{v}P(X,Y)\hspace*{3ex}\forall A,B\in\text{GL}_{n},\]
	\[ (\ast\ast) \hspace*{5ex} P(XC^{-1},Y{}^{t}C) =P(X,Y) \hspace*{3ex}\forall C\in\text{GL}_{k_{1}}\oplus\text{GL}_{k_{2}}\subset\text{GL}_{k},\]
	where $\text{GL}_{k_{1}}\oplus\text{GL}_{k_{2}}$ sits inside $\text{GL}_{k}$ as the block-diagonal embedding, and the pluriharmonic condition
	\[ (\ast\ast\ast) \hspace*{5ex} \Delta_{s,t}P(X,Y)=\sum_{u=1}^{k}\frac{\partial^{2}}{\partial x_{s,u}\partial y_{t,u}} P(X,Y)=0 \hspace*{3ex} \forall 1\leq s,t\leq n.\]
	Note that condition $(\ast\ast)$ is equivalent to being able to write $P(X,Y)$ as a polynomial $Q(X_{1}{}^{t}Y_{1},X_{2}{}^{t}Y_{2})$ in the entries of two $n\times n$ matrices, where $X_{1}$ and $Y_{1}$ are the first $k_{1}$ columns of $X$ and $Y$ respectively, $X_{2}$ and $Y_{2}$ the last $k_{2}$ columns of $X$ and $Y$ respectively. In this case, writing $W$ and $Z$ for the $n\times n$ variables of the polynomial $Q$, condition $(\ast)$ can be reformulated as
	\[ (\ast) \hspace*{5ex} Q(AW{}^{t}B,AZ{}^{t}B) = \text{det}(A)^{v}\text{det}(B)^{v}Q(W,Z). \]
	Up to isomorphism (so to match up with Shimura's description in \cite{shimura}) this is equivalent to:
	\[ (\ast) \hspace*{5ex} Q({}^{t}AWB,{}^{t}AZB) = \text{det}(A)^{v}\text{det}(B)^{v}Q(W,Z). \]
	As in definition \ref{def: notation} we denote by $\mathcal{H}_{n,v}(k_{1},k_{2})$ the space of such polynomials $Q$ corresponding to polynomials $P$ satisfying properties $(\ast)$, $(\ast\ast)$ and $(\ast\ast\ast)$. This sits inside the space $\mathcal{Q}_{n,v}(2)$ of polynomials $Q$ satisfying $(\ast)$ (and $(\ast\ast)$ by virtue of being polynomials in $X_{1}{}^{t}Y_{1},X_{2}{}^{t}Y_{2}$). Let $\mathcal{Q}_{n}(2)=\oplus_{v\geq 0}\mathcal{Q}_{n,v}(2)$. Note that this is a graded ring.
	
	\vspace*{1ex}
	
	The notation used for monomials here is $X^{\alpha}Y^{\beta}$ or $W^{l}Z^{m}$ where $\alpha$, $\beta$, $l$ and $m$ are ``index matrices'' with entries in $\Z_{\geq 0}$. More precisely, for a matrix variable $X\in M_{n,k}$ and $\alpha\in M_{n,k}(\Z_{\geq 0})$
	\[ X^{\alpha} = \prod_{i,j}x_{ij}^{\alpha_{ij}},\]
	and similarly for $Y^{\beta}$, $W^{\ell}$ and $Z^{m}$. 
	\newpage
	Recall that 
	\[ \text{det}(W) = \sum_{\sigma\in S_{n}}\text{sign}(\sigma)W^{\sigma}, \]
	where by an abuse of notation I write $\sigma$ for both the permutation in $S_{n}$ and the corresponding permutation matrix, i.e. $\sigma_{ij}=1$ if $\sigma(i)=j$, and $\sigma_{ij}=0$ otherwise.
	
	\vspace*{1ex}
	
	Define polynomials $Q_{a}(W,Z)$ for $a=0,\dots,n$ by
	\[ \text{det}(W+\lambda Z)=\sum_{a=0}^{n}Q_{a}(W,Z)\lambda^{a}. \]
	The goal is to eventually show that the space $\mathcal{Q}_{n}(2)$ is generated by the polynomials $\{Q_{0},\dots,Q_{n}\}$ as a $\C$-algebra. We begin with the following initial result:
	
	\vspace*{1ex}
	
	\begin{lemma}\label{lemma Qn1}
		The polynomials $Q_{0},\dots,Q_{n}$ are a $\C$-basis for $\mathcal{Q}_{n,1}(2)$.
	\end{lemma}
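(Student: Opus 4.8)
The plan is to show that $\{Q_0,\dots,Q_n\}$ both lies in $\mathcal{Q}_{n,1}(2)$ and spans it, and that these polynomials are linearly independent. First I would verify membership: from the defining relation $\det(W+\lambda Z)=\sum_{a=0}^n Q_a(W,Z)\lambda^a$, substituting $W\mapsto {}^tAWB$ and $Z\mapsto {}^tAZB$ gives $\det({}^tA(W+\lambda Z)B)=\det(A)\det(B)\det(W+\lambda Z)$, and comparing coefficients of $\lambda^a$ shows each $Q_a$ satisfies condition $(\ast)$ with $v=1$. (Each $Q_a$ is automatically a polynomial in the entries of $W$ and $Z$, hence satisfies $(\ast\ast)$ by construction.) Linear independence is immediate since $Q_a$ is the sum of those monomials $W^\ell Z^m$ with $|m|=a$ (it is homogeneous of degree $a$ in the entries of $Z$ and $n-a$ in those of $W$), so distinct $Q_a$ involve disjoint sets of monomials.

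The substantive step is to show these span all of $\mathcal{Q}_{n,1}(2)$. Take any $Q\in\mathcal{Q}_{n,1}(2)$. Condition $(\ast)$ with $B=1_n$ says $Q({}^tAW,{}^tAZ)=\det(A)\,Q(W,Z)$ for all $A\in\mathrm{GL}_n$; this is the statement that $Q$, viewed as a function of the $2n$ columns of the $n\times 2n$ matrix $(W\mid Z)$, is a relative invariant of $\mathrm{GL}_n$ acting on $n$-tuples of these columns with character $\det$. By the first fundamental theorem for $\mathrm{GL}_n$ (equivalently, classical invariant theory of the determinant / Cramer-type identities), such a function must be a $\C$-linear combination of the $n\times n$ minors of $(W\mid Z)$ — i.e. of the bracket polynomials $[j_1,\dots,j_n]$ formed by choosing $n$ columns. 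Then the right $B$-action from $(\ast)$ (now with $A=1_n$) forces the combination to be invariant under $\mathrm{GL}_n$ acting on the right with character $\det$ as well; decomposing by how many chosen columns come from $W$ versus $Z$, the minor using $a$ columns of $Z$ and $n-a$ columns of $W$ must appear with a coefficient dictated (up to one scalar per value of $a$) by the alternating structure, and that symmetrized combination is exactly $Q_a$. Hence $Q=\sum_a c_a Q_a$.

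Alternatively — and this is probably the cleanest route to write up — I would argue directly on dimensions: show $\dim\mathcal{Q}_{n,1}(2)\le n+1$ by a highest-weight / representation-theoretic count (the space of $\mathrm{GL}_n\times\mathrm{GL}_n$ relative invariants of weight $(\det,\det)$ inside $\C[W,Z]$, with $W,Z$ transforming as $(\mathrm{std}\otimes\mathrm{std})$, decomposes via $\mathrm{std}\oplus\mathrm{std}$ and the Cauchy/Pieri formula, contributing the irreducible $(\det)\boxtimes(\det)$ with multiplicity exactly $n+1$, one for each way of splitting the $n$ boxes between the two copies), and combine with the $n+1$ independent explicit elements $Q_a$ already produced. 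The main obstacle is making the invariant-theory input precise and self-contained: one must be careful that $(\ast)$ with $v=1$ really does cut out exactly the $\det$-isotypic relative invariants and that no extra polynomial solutions sneak in for small $n$ or degenerate weights; citing the first fundamental theorem of invariant theory for $\mathrm{GL}_n$ (or Shimura's treatment in \cite{shimura}, to which the paper already defers) should handle this cleanly.
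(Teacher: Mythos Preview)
Your proposal is correct, but it takes a considerably heavier route than the paper. The paper's argument is entirely elementary: given a nonzero $Q\in\mathcal{Q}_{n,1}(2)$, pick any monomial $C_{\ell,m}W^{\ell}Z^{m}$ appearing in it; testing $(\ast)$ against diagonal matrices forces $\ell+m$ to be a permutation matrix, and then testing against permutation matrices forces the entire signed $S_n\times S_n$-orbit of that monomial to appear in $Q$ with the same coefficient. That orbit sum is identified directly with $C_{\ell,m}Q_a$ (where $a$ is the number of nonzero entries of $m$), so one subtracts and inducts on the number of monomials. No invariant theory, no representation theory, no citations.

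Your first approach via the first fundamental theorem for $\mathrm{GL}_n$ is valid but imports a nontrivial external result, and the step where the right $B$-action pins down ``one scalar per value of $a$'' still needs to be written out carefully (you acknowledge this). Your second approach, the dimension count via Cauchy/Pieri and Shimura's description of $\tau^a$, is essentially the paper's Proposition~\ref{prop dimension} specialized to $v=1$; the paper deliberately postpones that machinery and instead proves the $v=1$ case by hand, so that Lemma~\ref{lemma Qn1} is logically independent of (and in fact a warm-up for) the later representation-theoretic argument. The payoff of the paper's approach is a self-contained, two-paragraph proof; the payoff of yours is that it unifies with the general-$v$ case, at the cost of front-loading the Littlewood--Richardson input.
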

	\begin{proof}
		First, the polynomials $Q_{0},\dots,Q_{n}$ must be linearly independent over $\C$ since they share no monomials. To prove that they span $\mathcal{Q}_{n,1}(2)$, we show that any non-zero $Q\in\mathcal{Q}_{n,1}(2)$ has some multiple of one of the polynomials $Q_{0},\dots,Q_{n}$ as a summand. For any such $Q$, take a monomial with a non-zero coefficient, $C_{\ell,m}W^{\ell}Z^{m}$. Since $Q\in \mathcal{Q}_{n,1}(2)$, $\ell+m$ must be a permutation matrix in order for $Q$ to satisfy the homogeneity condition $(\ast)$ under the action of diagonal matrices. Let $a$ be the number of non-zero entries of $m$ (so $0\leq a \leq n)$. Also by the homogeneity condition considered for permutation matrices, we know that $Q$ in fact has a summand
		\[ \sum_{\sigma\in S_{n}}\sum_{\tau\in S_{n}}\frac{C_{\ell,m}}{a!(n-a)!}\text{sign}(\sigma)\text{sign}(\tau)W^{\sigma\ell\tau}Z^{\sigma m\tau}.\]
		Observing that we may write 
		\[ Q_{a}(W,Z) = \sum_{\sigma,\tau\in S_{n}} \frac{1}{a!(n-a)!}\text{sign}(\sigma)\text{sign}(\tau)W^{\sigma^{-1}(I_{n-a}\oplus 0_{a})\tau}Z^{\sigma^{-1}(0_{n-a}\oplus I_{a})\tau},\]
		we see that the summand of $Q$ we found above is (a re-ordering of) the polynomial $C_{\ell,m}Q_{a}$.
		
	\end{proof}

	\vspace*{3ex}
	
	Since all of the polynomials $Q_{0},\dots,Q_{n}\in \mathcal{Q}_{n,1}(2)$, it follows that for any $v\geq 1$, any product of $v$ of $Q_{0},\dots,Q_{n}$ lies inside $\mathcal{Q}_{n,v}(2)$, that is $\C[Q_{0},\dots,Q_{n}]_{v}\subseteq \mathcal{Q}_{n,v}(2)$ and $\C[Q_{0},\dots,Q_{n}]\subseteq \mathcal{Q}_{n}(2)$. To show that these containments are in fact equalities, we show that the $v$-products of $Q_{0},\dots,Q_{n}$ are a basis for $\mathcal{Q}_{n,v}(2)$ for each $v$. Their linear independence is a consequence of the next lemma, then we show that $\mathcal{Q}_{n,v}(2)$ is of the correct dimension, specifically ${n+v}\choose{v}$.
	
	\vspace*{1ex}

	\begin{lemma} \label{lemma alg ind}
		The polynomials $Q_{0}(W,Z),Q_{1}(W,Z),\dots,Q_{n}(W,Z)$ are algebraically independent in $\C[W,Z]$.
	\end{lemma}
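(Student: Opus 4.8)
The plan is to reduce the statement, by a single well-chosen specialization, to the classical fact that the coefficients of a characteristic polynomial are algebraically independent. The key observation is that each $Q_a$ is, essentially by definition, a ``coefficient of $\lambda^a$'' in a determinant, so specializing $W$ to a scalar matrix turns the $Q_a$ into (rescaled) characteristic-polynomial coefficients of $Z$.

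First I would set $W = c\cdot 1_n$ for a single new scalar variable $c$, leaving $Z$ a full matrix of variables. From $\det(c\cdot 1_n + \lambda Z) = c^n\det(1_n + (\lambda/c)Z) = \sum_{a=0}^n c^{\,n-a}\,p_a(Z)\,\lambda^a$, where $\det(1_n + tZ) = \sum_{a=0}^n p_a(Z)t^a$ (so $p_0 = 1$, $p_n = \det Z$, and $p_a(Z)$ is a polynomial in the entries of $Z$), matching coefficients of $\lambda^a$ gives $Q_a(c\cdot 1_n, Z) = c^{\,n-a}p_a(Z)$. Since the substitution $W\mapsto c\cdot 1_n$ is a $\C$-algebra homomorphism $\C[W,Z]\to\C[c,Z]$, it suffices to show that $c^n,\, c^{\,n-1}p_1(Z),\,\dots,\,c^{\,0}p_n(Z)$ are algebraically independent in $\C[c,Z]$.

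Next I would invoke the classical fact that $p_1(Z),\dots,p_n(Z)$ are algebraically independent in $\C[Z]$ --- immediate on restricting $Z$ to a diagonal matrix, where they become the elementary symmetric polynomials in the diagonal entries. Hence the $\C$-subalgebra $\C[c,p_1(Z),\dots,p_n(Z)]\subseteq\C[c,Z]$ is a polynomial ring in the $n+1$ independent indeterminates $c, p_1,\dots,p_n$, and the desired independence reduces to the purely formal claim that the map $u_a\mapsto c^{\,n-a}p_a$ ($0\le a\le n$, with $p_0 := 1$) is an injective $\C$-algebra homomorphism $\C[u_0,\dots,u_n]\to\C[c,p_1,\dots,p_n]$. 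This I would check by tracking monomials: $u^\alpha = \prod_a u_a^{\alpha_a}$ maps to $c^{\,\sum_a(n-a)\alpha_a}\,p_1^{\alpha_1}\cdots p_n^{\alpha_n}$, and from this image one reads off $\alpha_1,\dots,\alpha_n$ from the $p_i$-exponents and then recovers $\alpha_0$ from the $c$-exponent; so distinct monomials have distinct images, whence no nonzero polynomial lies in the kernel. Chaining the two injections, any relation $F(Q_0,\dots,Q_n)=0$ forces $F=0$.

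I do not expect a serious obstacle; the only thing to be careful about is the choice of specialization. The tempting simplification of setting $W = 1_n$ (or anything that makes one of the $Q_a$ a nonzero constant) collapses $Q_0$ or $Q_n$ to a unit and so cannot detect independence --- it is precisely to keep all $n+1$ of the $Q_a$ alive as honest algebraically independent polynomials that one must retain the scalar parameter $c$. An alternative that sidesteps specialization is to exhibit a point of $M_{n,n}^2$ where the differentials $dQ_0,\dots,dQ_n$ are linearly independent, e.g.\ by evaluating a suitable $(n+1)\times(n+1)$ Jacobian minor at a generic diagonal pair $(W,Z)$; this also works but the determinant bookkeeping is heavier than the substitution above.
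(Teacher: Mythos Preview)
Your argument is correct, and the core idea---specialize so as to reduce to the algebraic independence of elementary symmetric polynomials---is exactly the paper's. The paper, however, takes precisely the shortcut you warn against: it sets $Z=1_n$, so that $Q_a(W,1_n)$ becomes the $(n-a)$th elementary symmetric polynomial $s_{n-a}$ in the eigenvalues of $W$, and then invokes the algebraic independence of $s_0,\dots,s_n$. As you anticipate, this is not literally complete, since $s_0=1$ is a constant; the unstated step is that each $Q_a$ is homogeneous of total degree $n$ in the entries of $(W,Z)$, so any relation $F(Q_0,\dots,Q_n)=0$ may be taken homogeneous in $u_0,\dots,u_n$, and for homogeneous $F$ the vanishing of $F(u_0,\dots,u_{n-1},1)$ does force $F=0$. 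Your introduction of the scalar $c$ trades this homogeneity reduction for a short monomial-injectivity check and makes the proof self-contained; so your instinct about the $W=1_n$ specialization is sound, though ``cannot detect independence'' is slightly too strong---it can, once homogeneity is invoked.
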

	
	\vspace*{1ex}
	
	\begin{proof}
		Recall that we defined the polynomials $Q_{a}$ (for each $a=0,\dots,n$) by
		\[ \text{det}(W+\lambda Z)=\sum_{a=0}^{n}Q_{a}(W,Z)\lambda^{a}. \]
		In particular, evaluating at $Z=1_{n}$ we have that
		\[ \text{det}(W+\lambda 1_{n})=\sum_{a=0}^{n}Q_{a}(W,1_{n})\lambda^{a} \]
		and therefore $Q_{a}(W,1_{n})$ is the $(n-a)$th elementary symmetric polynomial in the eigenvalues of $W$ (counted with multiplicity). Explicitly, if $W$ is diagonal then we have
		\[ \sum_{a=0}^{n}s_{n-a}(w_{1,1},w_{2,2},\dots,w_{n,n})\lambda^{a} = \prod_{i=1}^{n}(w_{i,i}+\lambda) = \text{det}(W+\lambda 1_{n}) = \sum_{a=0}^{n}Q_{a}(W,1_{n})\lambda^{a}, \]
		where $s_{n-a}$ is the $(n-a)$th elementary symmetric polynomial on $n$ arguments (and $s_{0}$ is the constant polynomial $1$). So any non-trivial algebraic relation on $Q_{0},\dots,Q_{n}$ restricts to an algebraic relation on $s_{0},\dots,s_{n}$ inside the ring of polynomials in $n$ variables (given by the eigenvalues of $W$). But the elementary symmetric polynomials are algebraically independent, so the polynomials $Q_{0},\dots,Q_{n}$ must be too.
	\end{proof}

	\vspace*{3ex}
	
	\begin{proposition}\label{prop dimension}
		The dimension of $\mathcal{Q}_{n,v}(2)$ as a vector space over $\C$ is ${n+v}\choose{v}$ for each integer $v>0$. (The same holds for $v=0$ trivially.)
	\end{proposition}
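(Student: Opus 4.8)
The inclusion $\C[Q_0,\dots,Q_n]_v\subseteq\mathcal{Q}_{n,v}(2)$, where $\C[Q_0,\dots,Q_n]_v$ denotes the span of the monomials $\prod_a Q_a^{c_a}$ with $\sum_a c_a=v$, is immediate: each $Q_a$ lies in $\mathcal{Q}_{n,1}(2)$ by Lemma \ref{lemma Qn1} and $\mathcal{Q}_n(2)$ is a graded ring, so each such monomial satisfies $(\ast)$ with exponent $v$. These $\binom{n+v}{v}$ monomials are linearly independent over $\C$ by Lemma \ref{lemma alg ind}, so $\dim\mathcal{Q}_{n,v}(2)\geq\binom{n+v}{v}$; the content of the proposition is therefore the reverse inclusion $\mathcal{Q}_{n,v}(2)\subseteq\C[Q_0,\dots,Q_n]_v$, and this is what I would prove.

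Fix $Q\in\mathcal{Q}_{n,v}(2)$. The plan is to reconstruct $Q$ from the one-matrix polynomial $P(W):=Q(W,1_n)$. Specializing $(\ast)$ to the scalar matrices $A=B=t\cdot 1_n$ shows $Q$ is homogeneous of degree $nv$. Working on the dense open locus $\det Z\neq 0$: taking $A=1_n$ and $B$ with ${}^tB=Z^{-1}$ in $(\ast)$ gives $Q(W,Z)=\det(Z)^v\,P(WZ^{-1})$; and taking $A=g$ with ${}^tB=g^{-1}$ for arbitrary $g\in\mathrm{GL}_n$ gives $P(gWg^{-1})=P(W)$, i.e. $P$ is invariant under conjugation. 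By the classical fact that the conjugation-invariant polynomials on $M_n$ are exactly the polynomials in the coefficients of the characteristic polynomial, and since $Q_a(W,1_n)$ is the $(n-a)$-th elementary symmetric function of the eigenvalues of $W$ (as recorded in the proof of Lemma \ref{lemma alg ind}), there is a polynomial $\Phi$ in $n$ variables with $P(W)=\Phi\bigl(Q_0(W,1_n),\dots,Q_{n-1}(W,1_n)\bigr)$. Since $\det(WZ^{-1}+\lambda 1_n)=\det(W+\lambda Z)/\det Z$ and $Q_n=\det Z$, we have $Q_a(WZ^{-1},1_n)=Q_a(W,Z)/Q_n(W,Z)$, so as rational functions in the entries of $W$ and $Z$,
\[ Q(W,Z)\;=\;Q_n(W,Z)^v\,\Phi\!\left(\tfrac{Q_0}{Q_n},\dots,\tfrac{Q_{n-1}}{Q_n}\right). \]

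It now suffices to show $\deg\Phi\leq v$: clearing denominators then exhibits $Q$ as an element of $\C[Q_0,\dots,Q_n]$ homogeneous of degree $v$ in the $Q_a$, as desired. Write $\Phi=\sum_a\phi_a u^a$ and set $N=\max\bigl(0,\ \max_{\phi_a\neq 0}(|a|-v)\bigr)$; multiplying the displayed identity by $Q_n^N$ yields the polynomial identity $Q_n^N Q=\sum_a\phi_a\,Q_n^{\,N+v-|a|}\prod_i Q_{i-1}^{a_i}$. If $N>0$, then $Q_n=\det Z$ — which is irreducible in $\C[W,Z]$ — divides the left-hand side, and reducing mod $Q_n$ forces $\det Z$ to divide $\sum_{|a|=N+v}\phi_a\prod_i Q_{i-1}^{a_i}$. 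The crux is the sub-claim that \emph{$\det Z$ divides no nonzero $\C$-linear combination of distinct monomials in $Q_0,\dots,Q_{n-1}$ of a common total degree.} I would prove this by restricting to diagonal $W=\mathrm{diag}(w_1,\dots,w_n)$, $Z=\mathrm{diag}(z_1,\dots,z_n)$ and then setting $z_n=0$: there $\det Z$ becomes $z_1\cdots z_n$ while, for $0\le a\le n-1$, a direct computation shows $Q_a$ becomes $w_n$ times the corresponding polynomial $\widetilde Q_a$ for $(n-1)\times(n-1)$ matrices in $w_1,\dots,w_{n-1},z_1,\dots,z_{n-1}$, and the $\widetilde Q_a$ are algebraically independent by the $(n-1)\times(n-1)$ case of Lemma \ref{lemma alg ind}. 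Thus divisibility by $\det Z$ would force $w_n^{\,N+v}\sum_{|a|=N+v}\phi_a\prod_i\widetilde Q_{i-1}^{a_i}=0$, hence $\phi_a=0$ for all $a$ with $|a|=N+v$, contradicting $N>0$. Therefore $N=0$, so $\deg\Phi\le v$ and $Q\in\C[Q_0,\dots,Q_n]_v$. The only real obstacle is this divisibility sub-claim, i.e. ruling out that cancellation among terms of high $Q$-degree could yield elements of $\mathcal{Q}_{n,v}(2)$ outside $\C[Q_0,\dots,Q_n]_v$; everything else is bookkeeping with the relation $(\ast)$ and the expansion $\det(W+\lambda Z)=\sum_a Q_a\lambda^a$.
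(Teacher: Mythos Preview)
Your argument is correct and takes a genuinely different route from the paper's. The paper computes $\dim\mathcal{Q}_{n,v}(2)$ by representation theory: it identifies $\mathcal{Q}_{n,v}(2)$ with the $\det^{v}\boxtimes\det^{v}$-isotypic component of $\C[W,Z]_{nv}$, decomposes each $\C[W]_{a}\otimes\C[Z]_{nv-a}$ via Hua's theorem (Shimura, Theorem~12.7), and counts multiplicities using the Littlewood--Richardson rule, arriving at the number of Young diagrams inside a $v\times n$ box. Only afterwards does it deduce $\mathcal{Q}_{n,v}(2)=\C[Q_{0},\dots,Q_{n}]_{v}$ in Corollary~\ref{coro Qnv}. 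You instead prove the equality $\mathcal{Q}_{n,v}(2)=\C[Q_{0},\dots,Q_{n}]_{v}$ directly and read off the dimension: using $(\ast)$ with $B={}^{t}Z^{-1}$ recovers $Q$ from the conjugation-invariant polynomial $P(W)=Q(W,1_{n})$, classical invariant theory for $\mathrm{GL}_{n}$ acting by conjugation writes $P$ as a polynomial $\Phi$ in the $Q_{a}(W,1_{n})$, and a divisibility-by-$\det Z$ argument bounds $\deg\Phi\le v$. Your approach is more elementary (no Littlewood--Richardson, no external decomposition theorem) and collapses Proposition~\ref{prop dimension} and Corollary~\ref{coro Qnv} into one step; the paper's approach, while heavier, yields the finer information of exactly which $\mathrm{GL}_{n}\times\mathrm{GL}_{n}$-constituents of $\C[W,Z]_{nv}$ contribute.

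One small point to tighten: in your sub-claim you restrict to diagonal $W,Z$ and invoke ``the $(n-1)\times(n-1)$ case of Lemma~\ref{lemma alg ind}'' for the $\widetilde Q_{a}$. But Lemma~\ref{lemma alg ind} concerns full matrix variables, and its proof (setting $Z=1$) only produces $n-1$ algebraically independent functions on your diagonal locus, since $\widetilde Q_{n-1}(\,\cdot\,,1_{n-1})=1$. What you actually need is that the $n$ coefficients of $\prod_{i=1}^{n-1}(w_{i}+\lambda z_{i})$ are algebraically independent in $\C[w_{1},\dots,w_{n-1},z_{1},\dots,z_{n-1}]$; this is true (the map to $\C^{n}$ is dominant, as every polynomial of degree exactly $n-1$ factors into linear forms over $\C$), but deserves a sentence. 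Alternatively, restrict as in the proof of Lemma~\ref{lemma lin indep}---zeroing the last row and column of $Z$ and the off-diagonal part of the last row and column of $W$---so that $Q_{a}$ becomes $w_{n,n}\,Q_{a}^{[n,n]}$ with $Q_{a}^{[n,n]}$ the genuine $(n-1)\times(n-1)$ polynomial on full matrices, and then Lemma~\ref{lemma alg ind} applies verbatim.
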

	
	\vspace*{2ex}
	
	\begin{proof}
		First, we have that $\C[W,Z]\cong \C[W]\otimes\C[Z]$ not only as $\C$-algebras but also as $\text{GL}_{n}\times\text{GL}_{n}$ representations. Now, $\mathcal{Q}_{n,v}(2)\subset\C[W,Z]_{nv}$ since, by considering the action of diagonal matrices, for example, or by previous arguments, each polynomial $Q\in\mathcal{Q}_{n,v}(2)$ is homogeneous of total degree $nv$ in $W$ and $Z$. The standard decomposition of tensor products is
		\[ \C[W,Z]_{nv}\cong\bigoplus_{a=0}^{nv}\C[W]_{a}\otimes\C[Z]_{nv-a}.\]
		Now, the space $\C[W]_{a}$ together with its $\text{GL}_{n}\times\text{GL}_{n}$-action $(A,B)f(W)=f({}^{t}AWB)$ is precisely the representation $\tau^{a}$ of Shimura \cite{shimura} \S 12, and similarly $\C[Z]_{nv-a}$ with the same action is Shimura's $\tau^{nv-a}$. Since $\mathcal{Q}_{n,v}(2)$ is precisely the summand of $\text{det}^{v}\boxtimes\text{det}^{v}$ inside $\C[W,Z]_{nv}$, we just need to find the summands (or, for the dimension, the multiplicity of) the representation $\text{det}^{v}\boxtimes\text{det}^{v}$ in all of the summands $\C[W]_{a}\otimes\C[Z]_{nv-a}=\tau^{a}\otimes\tau^{nv-a}$ ($a=0,\dots,nv$).
		
		\vspace*{2ex}
		
		By \cite{shimura} Theorem 12.7 (attributed to L.-K. Hua), for any irreducible representations $\rho_{1}$ and $\rho_{2}$ of $\text{GL}_{n}$, the representation $\rho_{1}\boxtimes\rho_{2}$ occurs as a summand in the representation $\tau^{a}$ with multiplicity one if and only if $\rho_{1}$ and $\rho_{2}$ correspond to the same partition $\lambda$ of $a$ (with at most $n$ parts), $\lambda=(\lambda_{1},\dots,\lambda_{n})$ with $\lambda_{1}\geq\lambda_{2}\geq\dots\geq \lambda_{n}\geq 0$ and $\lambda_{1}+\dots+\lambda_{n}=a$, and does not occur otherwise. For representations $\rho_{1}$ and $\rho_{2}$ both corresponding to the same $n$-part partition $\lambda$ of $a$, and representations $\sigma_{1}$ and $\sigma_{2}$ both corresponding to the same $n$-part partition $\mu$ of $nv-a$, the multiplicity of $\text{det}^{v}\boxtimes\text{det}^{v}$ inside $(\rho_{1}\boxtimes\rho_{2})\otimes(\sigma_{1}\boxtimes\sigma_{2})$ is the product of the multiplicity of $\text{det}^{v}$ in $\rho_{1}\otimes\sigma_{1}$ and $\rho_{2}\otimes\sigma_{2}$ respectively. 
		
		Since this multiplicity depends only on the partitions $\lambda$ and $\mu$, the representation $\text{det}^{v}$ occurs with the same multiplicity within $\rho_{1}\otimes\sigma_{1}$ and $\rho_{2}\otimes\sigma_{2}$. This multiplicity is precisely given by the Littlewood--Richardson coefficient $\text{LR}_{\lambda\mu}^{(v^{n})}$, where $(v^{n})$ denotes the partition $(v,v,\dots,v)$ of $nv$. Either by application of the Littlewood--Richardson rule (as stated in Chapter 5 of \cite{Fulton} for example), or as a consequence of a result of Okada (stated as a remark following Theorem 2.1 of \cite{Okada}) the coefficient $\text{LR}_{\lambda\mu}^{(v^{n})}=1$ if and only if $\lambda_{1}\leq v$ and $\mu_{i}+\lambda_{n+1-i}=v$ for each $i=1,\dots,n$, and $\text{LR}_{\lambda\mu}^{(v^{n})}=0$ otherwise.
		
		\vspace*{2ex}
		
		So, for each $a=0,\dots,nv$ and each partition $\lambda=(\lambda_{1},\dots,\lambda_{n})$ of $a$ with $\lambda_{1}\leq v$ there is exactly one copy of $\text{det}^{v}\boxtimes\text{det}^{v}$ inside $\tau^{a}\otimes\tau^{nv-a}$. That is, the multiplicity of $\text{det}^{v}\boxtimes\text{det}^{v}$ inside $\tau^{a}\otimes\tau^{nv-a}$ is equal to the number of Young diagrams with $a$ boxes fitting inside $(v)^{n}$, the $v\times n$ rectangle. So the total multiplicity of $\text{det}^{v}\otimes\text{det}^{v}$ in $\C[W,Z]_{nv}$ is equal to the number of Young diagrams of any size inside $(v)^{n}$. By drawing a lattice path along the lower-right edge of any such Young diagram, we see that this is equal to the number of lattice paths from $(0,0)$ to $(v,n)$. This is equal to ${n+v}\choose{v}$ since the path is $n+v$ steps long, and we just need to choose $v$ of those steps at which to travel right or equivalently $n$ of the steps at which to travel upwards.

	\end{proof}

	\begin{corollary}\label{coro Qnv}
		We have
		\[ \mathcal{Q}_{n}(2)=\C[Q_{0},\dots,Q_{n}] \hspace*{3ex} \text{ and } \hspace*{3ex} \mathcal{Q}_{n,v}(2)=\C[Q_{0},\dots,Q_{n}]_{v}. \]
	\end{corollary}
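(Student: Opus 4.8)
The plan is to assemble the three preceding results into a dimension count. First I would record that one inclusion is already in hand: each $Q_{a}$ lies in $\mathcal{Q}_{n,1}(2)$ by Lemma \ref{lemma Qn1}, and since $\mathcal{Q}_{n}(2)=\oplus_{v\geq 0}\mathcal{Q}_{n,v}(2)$ is a graded ring, any product of $v$ of the $Q_{a}$ lands in $\mathcal{Q}_{n,v}(2)$; hence $\C[Q_{0},\dots,Q_{n}]_{v}\subseteq\mathcal{Q}_{n,v}(2)$ and $\C[Q_{0},\dots,Q_{n}]\subseteq\mathcal{Q}_{n}(2)$. It therefore remains to prove the reverse inclusion, and since both sides are finite-dimensional $\C$-vector spaces, it suffices to show $\dim_{\C}\C[Q_{0},\dots,Q_{n}]_{v}=\dim_{\C}\mathcal{Q}_{n,v}(2)$ for every $v$.

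Next I would compute the left-hand side. By Lemma \ref{lemma alg ind} the polynomials $Q_{0},\dots,Q_{n}$ are algebraically independent in $\C[W,Z]$, so $\C[Q_{0},\dots,Q_{n}]$ is a polynomial ring in the $n+1$ indeterminates $Q_{0},\dots,Q_{n}$, each of degree $1$ with respect to the grading just described (since each $Q_{a}\in\mathcal{Q}_{n,1}(2)$). Consequently a basis for $\C[Q_{0},\dots,Q_{n}]_{v}$ is given by the degree-$v$ monomials in $n+1$ variables, of which there are $\binom{(n+1)+v-1}{v}=\binom{n+v}{v}$; in particular these $v$-fold products of the $Q_{a}$ are linearly independent. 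On the other side, Proposition \ref{prop dimension} gives $\dim_{\C}\mathcal{Q}_{n,v}(2)=\binom{n+v}{v}$ for every integer $v>0$, and trivially for $v=0$.

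Combining the two computations, the inclusion $\C[Q_{0},\dots,Q_{n}]_{v}\subseteq\mathcal{Q}_{n,v}(2)$ is an inclusion of $\C$-vector spaces of the same finite dimension, hence an equality; summing over $v\geq 0$ yields $\C[Q_{0},\dots,Q_{n}]=\mathcal{Q}_{n}(2)$. I do not expect a genuine obstacle at this stage: all of the substance is already contained in Lemma \ref{lemma alg ind} (which forces the linear independence of the $v$-fold products, and so the dimension of the candidate subring) and in Proposition \ref{prop dimension} (the representation-theoretic dimension count via Hua's theorem and the Littlewood--Richardson coefficient $\mathrm{LR}_{\lambda\mu}^{(v^{n})}$). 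The one point deserving a sentence of care is that the grading on $\mathcal{Q}_{n}(2)$ restricts on the subring $\C[Q_{0},\dots,Q_{n}]$ to precisely the grading in which each $Q_{a}$ has degree $1$ — which is exactly the grading used in both dimension counts — so that the graded pieces match up as claimed in Corollary \ref{coro Qnv}.
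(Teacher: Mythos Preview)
Your proposal is correct and follows essentially the same approach as the paper: establish the inclusion $\C[Q_{0},\dots,Q_{n}]_{v}\subseteq\mathcal{Q}_{n,v}(2)$ via Lemma~\ref{lemma Qn1}, use Lemma~\ref{lemma alg ind} to conclude the $v$-fold products of the $Q_{a}$ are linearly independent and number $\binom{n+v}{v}$, and invoke Proposition~\ref{prop dimension} to match dimensions. The paper's proof is exactly this dimension-count argument, so there is nothing to add.
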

	
	\begin{proof}
		By Lemma \ref{lemma Qn1}, the polynomials $Q_{0},\dots,Q_{n}$ are a basis for $\mathcal{Q}_{n,1}(2)$ as a vector space over $\C$. Any $v$-product of the polynomials $Q_{0},\dots,Q_{n}$ is an element of $\mathcal{Q}_{n,v}(2)$. By Lemma \ref{lemma alg ind}, the polynomials $Q_{0},\dots,Q_{n}$ are algebraically independent and therefore all the distinct $v$-products of $Q_{0},\dots,Q_{n}$ are linearly independent over $\C$. There are ${n+v}\choose{v}$ such products, and by Proposition \ref{prop dimension} this is the dimension of $ \mathcal{Q}_{n,v}(2)$, so the set of $v$-products of the polynomials $Q_{0},\dots,Q_{n}$ is a basis for $\mathcal{Q}_{n,v}(2)$ for every degree $v$.
		
	\end{proof}
	
	\vspace*{1ex}
	
	\begin{remark}\label{rem Q and P}
		A priori, it is unclear that the space of polynomials inside $\mathcal{Q}_{n,v}(2)$ which arise as polynomials of the form $P(X,Y)$ is necessarily all of $\mathcal{Q}_{n,v}(2)$. That is, not every polynomial $Q\in\mathcal{Q}_{n,v}(2)$ might correspond to a polynomial $P(X,Y)$. However, having shown that $Q_{0},\dots,Q_{n}$ generate $\mathcal{Q}_{n}(2)$ then we see that this is indeed the case since, setting $W=X_{1}{}^{t}Y_{1}$ and $Z=X_{2}{}^{t}Y_{2}$ as usual,
		\[ \sum_{m=0}^{n}Q_{m}(W,Z)\lambda^{m} = \text{det}(W+\lambda Z) = \text{det}(X_{1}{}^{t}Y_{1} + \lambda X_{2}{}^{t}Y_{2}) = \sum_{m=0}^{n}P_{m}(X,Y)\lambda^{m},\]
		for some polynomials $P_{m}(X,Y)$. By comparing coefficients of $\lambda^{m}$, we must have that $P_{m}(X,Y)=Q_{m}(X_{1}{}^{t}Y_{1},X_{2}{}^{t}Y_{2})$.  
	\end{remark}

	\begin{theorem}\label{thm coeff}
		For each fixed $n$ and $v$, writing a polynomial $Q\in\mathcal{Q}_{n,v}(2)$ as
		\[ Q=\sum_{\alpha}C(\alpha)\prod_{j=0}^{n}Q_{j}^{\alpha_{j}},\]
		where the sum ranges over index tuples $\alpha=(\alpha_{0},\dots,\alpha_{n})\in\Z_{\geq 0}^{n+1}$ with $\sum_{j=0}^{n}\alpha_{j}=v$, $Q$ is pluriharmonic i.e. $Q\in\mathcal{H}_{n,v}(k_{1},k_{2})$ if the coefficients $C(\alpha)$ with $\alpha\neq(0,\dots,0,v)$ satisfy the following linear relations:
		
		\begin{equation*}
			\begin{split} 
				0 = & C(\alpha)\alpha_{m}(k_{1}+1-n+m)  + C(\alpha-1_{m}+1_{m+1}) (\alpha-1_{m}+1_{m+1})_{m+1}(k_{2}-m) \\
				& + C(\alpha)\alpha_{m} (\alpha_{m}-1) \\
				& + \sum_{\substack{m<\ell\leq \ell'\leq n\\\ell+\ell'-m-1\leq n}} C(\tilde{\alpha}(m,\ell,\ell')) \tilde{\alpha}(m,\ell,\ell')_{\ell} (\tilde{\alpha}(m,\ell,\ell')_{\ell'}-\delta_{\ell,
					\ell'}) (2-\delta_{\ell,\ell'})\\
				& - \sum_{\substack{m<\ell\leq \ell'\leq n\\\ell+\ell'-m\leq n}} C(\alpha(m,\ell,\ell'))\alpha(m,\ell,\ell')_{\ell} (\alpha(m,\ell,\ell')_{\ell'}-\delta_{\ell,\ell'}) (2-\delta_{\ell,\ell'}). \\
			\end{split}
		\end{equation*}
		
		Here $0\leq m<n$ is the least integer such that $\alpha_{m}>0$ and we write $\alpha(m,\ell,\ell'):=\alpha-1_{m}+1_{\ell}+1_{\ell'}-1_{\ell+\ell'-m}$ and $\tilde{\alpha}(m,\ell,\ell'):=\alpha-1_{m}+1_{\ell}+1_{\ell'}-1_{\ell+\ell'-m-1}$.
		In particular, every coefficient $C(\alpha)$ is determined by the value of $C(0,\dots,0,v)$ so there is a unique (up to scaling) pluriharmonic polynomial inside $\mathcal{Q}_{n,v}(2)$, $\mathcal{H}_{n,v}(k_{1},k_{2})$ is one-dimensional.
		
	\end{theorem}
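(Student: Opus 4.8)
The plan is to reduce the pluriharmonic condition on $P(X,Y)=Q(X_1{}^tY_1,X_2{}^tY_2)$ to a concrete second-order system on $Q$ in the $n\times n$ variables $W=X_1{}^tY_1$ and $Z=X_2{}^tY_2$, and then to test it against the basis of $\mathcal{Q}_{n,v}(2)$ furnished by Corollary \ref{coro Qnv}. Since $W_{ab}=\sum_{u=1}^{k_1}x_{au}y_{bu}$ and $Z_{ab}=\sum_{u=k_1+1}^{k}x_{au}y_{bu}$, the chain rule gives, for each $1\le s,t\le n$,
\[
\Delta_{s,t}P \;=\; k_1\,\partial_{W_{st}}Q+k_2\,\partial_{Z_{st}}Q+\sum_{b,c}W_{cb}\,\partial_{W_{ct}}\partial_{W_{sb}}Q+\sum_{b,c}Z_{cb}\,\partial_{Z_{ct}}\partial_{Z_{sb}}Q .
\]
Because each $k_i\ge n$ forces $(X,Y)\mapsto(X_1{}^tY_1,X_2{}^tY_2)$ to be surjective onto $M_{n,n}(\C)^2$, the polynomial $Q$ is pluriharmonic precisely when this operator $\mathcal{D}_{s,t}$ annihilates $Q$ for every $s,t$. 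So the first step is just this computation, together with the identification $\mathcal{H}_{n,v}(k_1,k_2)=\{\,Q\in\mathcal{Q}_{n,v}(2):\mathcal{D}_{s,t}Q=0\ \text{for all }s,t\,\}$.

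The core of the argument is to evaluate $\mathcal{D}_{s,t}$ on the generating series $\prod_{i=1}^{v}\det(W+\lambda_iZ)$, from which its action on each basis monomial $\prod_{j=0}^{n}Q_j^{\alpha_j}$ is read off as the appropriate symmetric coefficient in $\lambda_1,\dots,\lambda_v$. Writing $M_i=W+\lambda_iZ$ and using $\partial_{W_{st}}\det M_i=\det M_i\,(M_i^{-1})_{ts}$, $\partial_{Z_{st}}\det M_i=\lambda_i\det M_i\,(M_i^{-1})_{ts}$, $\partial_{W_{ct}}(M_i^{-1})_{bs}=-(M_i^{-1})_{bc}(M_i^{-1})_{ts}$, together with $M_i^{-1}WM_i^{-1}=M_i^{-1}-\lambda_iM_i^{-1}ZM_i^{-1}$ and $\operatorname{tr}(M_i^{-1}W)=n-\lambda_i\operatorname{tr}(M_i^{-1}Z)$ for the diagonal second-order contributions, and the partial-fraction identity $M_i^{-1}ZM_j^{-1}=(\lambda_i-\lambda_j)^{-1}(M_j^{-1}-M_i^{-1})$ for the cross terms produced by the Leibniz rule, the whole expression collapses to a $\C$-linear combination of the $\det(W+\lambda Z)$ and of $\operatorname{adj}(W+\lambda Z)=\det(W+\lambda Z)(W+\lambda Z)^{-1}$. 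Re-expanding in powers of the $\lambda$'s and in the $\{Q_a\}$-basis, and keeping track of how many $W$- versus $Z$-derivatives fall on a given factor (the factors $(k_1+1-n+m)$ and $(k_2-m)$ arise from combining the first-order part $k_1\partial_W+k_2\partial_Z$ with the diagonal second-order contractions on one factor $Q_m$; the $\alpha_m(\alpha_m-1)$ from two derivatives hitting two distinct $Q_m$ factors; and the sums over $m<\ell\le\ell'$ from the cross terms relating the monomial of $\alpha$ to those of the neighbouring indices $\alpha(m,\ell,\ell')$ and $\tilde\alpha(m,\ell,\ell')$), the condition $\mathcal{D}_{s,t}Q=0$ becomes exactly one linear relation among the $C(\alpha)$ for each multi-index $\alpha\ne(0,\dots,0,v)$ — precisely the displayed relation indexed by $\alpha$ and by the least $m$ with $\alpha_m>0$. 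I expect this step, namely correctly organizing $\mathcal{D}_{s,t}\prod_i\det(W+\lambda_iZ)$ and matching each monomial to its term in the relation, to be the main obstacle; it is where the computation adapts the analogous Siegel calculation of \cite{EhoIbu97,Ibu99}.

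For the uniqueness statement, order the multi-indices $\alpha$ with $\sum_j\alpha_j=v$ by the pair $\bigl(\sum_j j\alpha_j,\,-\sum_j j^2\alpha_j\bigr)$ in the lexicographic order; the unique maximal element is $(0,\dots,0,v)$, since $\sum_j j\alpha_j\le nv$ with equality only there. Each shifted index on the right-hand side of the relation for $\alpha$ is strictly larger in this order: both $\alpha-1_m+1_{m+1}$ and $\tilde\alpha(m,\ell,\ell')$ raise $\sum_j j\alpha_j$ by one, while $\alpha(m,\ell,\ell')$ leaves it unchanged but strictly raises $-\sum_j j^2\alpha_j$, because $m+(\ell+\ell'-m)=\ell+\ell'$ with $m<\ell\le\ell'<\ell+\ell'-m$ forces $m^2+(\ell+\ell'-m)^2>\ell^2+\ell'^2$. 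Moreover the coefficient of $C(\alpha)$ in its own relation is $\alpha_m(k_1+1-n+m)+\alpha_m(\alpha_m-1)=\alpha_m(k_1+m+\alpha_m-n)$, a positive integer because $k_1\ge n$, $\alpha_m\ge 1$ and $m\ge 0$. Hence the relation for $\alpha$ determines $C(\alpha)$ from values of $C$ at strictly larger indices, so by descending induction along this order every $C(\alpha)$ is determined by $C(0,\dots,0,v)$ alone; the resulting triangular system is consistent, so its solution space is one-dimensional, and combined with the equivalence established above this shows $\mathcal{H}_{n,v}(k_1,k_2)$ is one-dimensional.
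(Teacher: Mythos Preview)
Your overall strategy matches the paper's: translate $\Delta_{s,t}P=0$ into a second-order operator on $Q(W,Z)$, compute it on the generators $Q_a$ and their products via $\det(W+\lambda Z)$, and then read off linear relations among the $C(\alpha)$. Your use of the multi-parameter generating function $\prod_i\det(W+\lambda_iZ)$ together with the resolvent-type identity $M_i^{-1}ZM_j^{-1}=(\lambda_i-\lambda_j)^{-1}(M_j^{-1}-M_i^{-1})$ is a pleasant repackaging of the paper's Leibniz bookkeeping with the cross terms $(Q_a,Q_b)_{i,W}$ and $(Q_a,Q_b)_{i,Z}$, but it is the same computation at heart. Your ordering argument for the triangular structure is correct and in fact more explicit than the paper's bare assertion that the lexicographic order works.

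There is, however, a genuine gap at the step ``re-expanding in powers of the $\lambda$'s and in the $\{Q_a\}$-basis.'' The output of $\mathcal{D}_{s,t}$ on a monomial $Q^\alpha$ is \emph{not} an element of $\C[Q_0,\dots,Q_n]$: it is a $\mathcal{Q}_n(2)$-linear combination of adjugate cofactors, i.e.\ of the polynomials $Q_m^{[s\ t]}$ (coefficients of $\lambda^m$ in $\det((W+\lambda Z)^{[s\ t]})$). The single polynomial identity $\sum_{s}\mathcal{D}_{s,s}Q=0$ in the $2n^2$ scalar variables $w_{ij},z_{ij}$ therefore takes the form $\sum_{m=0}^{n-1}R_m\cdot\bigl(\sum_i Q_m^{[i\ i]}\bigr)=0$ with $R_m\in\mathcal{Q}_{n,v-1}(2)$, and you can only deduce that each $R_m=0$ (and hence extract the relations coefficient-by-coefficient in the basis $\{Q^\beta\}_{|\beta|=v-1}$) once you know that the $\sum_i Q_m^{[i\ i]}$, $0\le m\le n-1$, are linearly independent over $\mathcal{Q}_n(2)$. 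The paper proves exactly this as a separate lemma (by induction on $n$, restricting to $w_{n,j}=w_{j,n}=z_{n,j}=z_{j,n}=0$ for $j<n$ and then $w_{n,n}=1$). Your sketch skips this step entirely; without it the passage from ``$\mathcal{D}Q=0$ as a polynomial in $W,Z$'' to ``one linear relation among the $C(\alpha)$ for each $\alpha$'' is not justified. You should either supply this linear-independence lemma or find another mechanism to separate the contributions of the different $Q_m^{[i\ i]}$.
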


	\begin{proof} We begin by rephrasing $(\ast\ast\ast)$, restated below, in terms of the polynomial $Q(W,Z)$ associated to $P(X,Y)$. 
		\[ (\ast\ast\ast) \hspace*{5ex} \Delta_{i,j}P(X,Y)=\sum_{u=1}^{k}\frac{\partial^{2}}{\partial x_{i,u}\partial y_{j,u}} P(X,Y)=0 \hspace*{3ex} \forall 1\leq i,j\leq n.\]
		First we re-write the operator $\Delta_{i,j}$ (in terms of $X,Y$ acting on $P(X,Y)$) as a differential operator in terms of $W=X_{1}^{t}Y_{1}$ and $Z=X_{2}^{t}Y_{2}$ acting on the polynomial $Q(W,Z)$ associated to $P$ (so $P(X,Y)=Q(W,Z)$). We have
		\[ w_{i,j}=\sum_{u=1}^{k_{1}}x_{i,u}y_{j,u} \hspace*{5ex} z_{i,j}=\sum_{u=k_{1}+1}^{k}x_{i,u}y_{j,u}, \]
		\[ \sum_{u=1}^{k}\frac{\partial}{\partial y_{j,u}} = \sum_{s=1}^{n}\left( \sum_{u=1}^{k_{1}} x_{s,u}\frac{\partial}{\partial w_{s,j}} + \sum_{u=k_{1}+1}^{k} x_{s,u}\frac{\partial}{\partial z_{s,j}} \right), \]
		and then
		\begin{equation*}
			\begin{split}
				&\sum_{u=1}^{k}\frac{\partial}{\partial x_{i,u}}\frac{\partial}{\partial y_{j,u}}\\ & = k_{1}\frac{\partial}{\partial w_{i,j}} + \sum_{s,t=1}^{n}\sum_{u=1}^{k_{1}}x_{s,u}y_{t,u}\frac{\partial}{\partial w_{s,j}}\frac{\partial}{\partial w_{i,t}} + k_{2}\frac{\partial}{\partial z_{i,j}} + \sum_{s,t=1}^{n}\sum_{u=k_{1}+1}^{k}x_{s,u}y_{t,u}\frac{\partial}{\partial z_{s,j}}\frac{\partial}{\partial z_{i,t}} \\
				& =  k_{1}\frac{\partial}{\partial w_{i,j}} + \sum_{s,t=1}^{n}w_{s,t}\frac{\partial}{\partial w_{s,j}}\frac{\partial}{\partial w_{i,t}} + k_{2}\frac{\partial}{\partial z_{i,j}} + \sum_{s,t=1}^{n}z_{s,t}\frac{\partial}{\partial z_{s,j}}\frac{\partial}{\partial z_{i,t}}.
			\end{split}
		\end{equation*}
		We write 
		\[ L^{(k_{1})}_{i,j}:=k_{1}\frac{\partial}{\partial w_{i,j}} + \sum_{s,t=1}^{n}w_{s,t}\frac{\partial}{\partial w_{s,j}}\frac{\partial}{\partial w_{i,t}}\hspace*{3ex}\text{and}\hspace*{3ex} L'^{(k_{2})}_{i,j}:=k_{2}\frac{\partial}{\partial z_{i,j}} + \sum_{s,t=1}^{n}z_{s,t}\frac{\partial}{\partial z_{s,j}}\frac{\partial}{\partial z_{i,t}}.\]
		So for a polynomial $P$ satisfying $(\ast\ast)$, $\Delta_{i,j}(P(X,Y))=0$ if and only if $(L^{(k_{1})}_{i,j}+L'^{(k_{2})}_{i,j})(Q(W,Z))=0$ where $Q$ is the polynomial associated to $P$. Recall that for a polynomial $P$ (and associated $Q$) satisfying $(\ast)$ and $(\ast\ast)$, $(\ast\ast\ast)$ holds if and only if $\sum_{i=1}^{n}\Delta_{i,i}(P(X,Y))=0$.
		
		Therefore, to find polynomials in $\mathcal{H}_{n,v}(k_{1},k_{2})$ we should calculate the action of $L_{i,i}^{(k_{1})}$ and $L'^{(k_{2})}_{i,i}$ on $Q_{0},\dots,Q_{n}$ and on products. First, for $Q,Q'\in\mathcal{Q}_{n}(2)$ we have
		\[ L_{i,i}^{(k_{1})}(QQ') = L_{i,i}^{(k_{1})}(Q)\cdot Q' + Q\cdot L_{i,i}^{(k_{1})} (Q') + \sum_{s,t=1}^{n} w_{s,t} \left( \frac{\partial}{\partial w_{s,i}} Q \cdot \frac{\partial}{\partial w_{i,t}} Q' + \frac{\partial}{\partial w_{i,t}} Q \cdot \frac{\partial}{\partial w_{s,i}} Q' \right),
		\]
		and
		\[ L'^{(k_{2})}_{i,i}(QQ') = L'^{(k_{2})}_{i,i}(Q)\cdot Q' + Q\cdot L'^{(k_{2})}_{i,i}(Q') + \sum_{s,t=1}^{n} z_{s,t} \left( \frac{\partial}{\partial z_{s,i}} Q \cdot \frac{\partial}{\partial z_{i,t}} Q' + \frac{\partial}{\partial z_{i,t}} Q \cdot \frac{\partial}{\partial z_{s,i}} Q' \right).
		\]
		\newpage
		\noindent For ease of notation, we use the following:
		\[ (Q,Q')_{i,W}:= \sum_{s,t=1}^{n} w_{s,t} \left( \frac{\partial}{\partial w_{s,i}} Q \cdot \frac{\partial}{\partial w_{i,t}} Q' + \frac{\partial}{\partial w_{i,t}} Q \cdot \frac{\partial}{\partial w_{s,i}} Q' \right),\]
		and
		\[ (Q,Q')_{i,Z}: = \sum_{s,t=1}^{n} z_{s,t} \left( \frac{\partial}{\partial z_{s,i}} Q \cdot \frac{\partial}{\partial z_{i,t}} Q' + \frac{\partial}{\partial z_{i,t}} Q \cdot \frac{\partial}{\partial z_{s,i}} Q' \right).\]
		So, re-writing the above, we obtain:
		\[ L_{i,i}^{(k_{1})}(QQ') = L_{i,i}^{(k_{1})}(Q)\cdot Q' + Q\cdot L_{i,i}^{(k_{1})} (Q') + (Q,Q')_{i,W},\]
		and
		\[ L'^{(k_{2})}_{i,i}(QQ') = L'^{(k_{2})}_{i,i}(Q)\cdot Q' + Q\cdot L'^{(k_{2})}_{i,i}(Q') + (Q,Q')_{i,Z}.\]
		Next, we calculate $L^{(k_{1})}_{i,i}(Q_{a})$ and $L'^{(k_{2})}_{i,i}(Q_{a})$ for $a=0,\dots,n$. There are two easy cases: recall that $Q_{0}(W,Z)=\text{det}(W)$ and $Q_{n}(W,Z)=\text{det}(Z)$ so 
		\[ L^{(k_{1})}_{i,i}(Q_{n}) = L'^{(k_{2})}_{i,i}(Q_{0}) = 0.\]
		For the other cases, we compute $L^{(k_{1})}_{i,i}(\text{det}(W+\lambda Z))$ and $L'^{(k_{2})}_{i,i}(\text{det}(W+\lambda Z))$ respectively, and consider the coefficients of $\lambda^{a}$. 
		
		Here we'll introduce some notation. For an $n\times n$ matrix $A$, I write $A^{\left[\substack{ i_{1},\dots,i_{\ell}\\j_{1},\dots,j_{\ell}}\right]}$ for the $(n-\ell)\times(n-\ell)$ matrix obtained from $A$ by deleting rows indexed $i_{1},\dots,i_{\ell}$ and columns indexed $j_{1},\dots,j_{\ell}$. Then, we write
		\[ \text{det}\left((W+\lambda Z)^{\left[\substack{ i_{1},\dots,i_{\ell}\\j_{1},\dots,j_{\ell}}\right]}\right) = \sum_{m=0}^{n-\ell}Q_{m}^{\left[\substack{ i_{1},\dots,i_{\ell}\\j_{1},\dots,j_{\ell}}\right]}(W,Z)\lambda^{m}. \]
		That is, $Q_{m}^{\left[\substack{ i_{1},\dots,i_{\ell}\\j_{1},\dots,j_{\ell}}\right]}(W,Z)$ is the coefficient of $\lambda^{m}$ in $\text{det}(W+\lambda Z)^{\left[\substack{ i_{1},\dots,i_{\ell}\\j_{1},\dots,j_{\ell}}\right]}$ (and we take $Q_{m}^{\left[\substack{ i_{1},\dots,i_{\ell}\\j_{1},\dots,j_{\ell}}\right]}(W,Z)=0$ if $m<0$ or $m>n-\ell$). Now:
		\begin{equation*}
			\begin{split}
				L^{(k_{1})}_{i,i}(\text{det}(W+\lambda Z)) & = k_{1}\frac{\partial}{\partial w_{i,i}}\text{det}(W+\lambda Z) + \sum_{s=1}^{n}\sum_{t=1}^{n}w_{s,t}	\frac{\partial}{\partial w_{s,i}}\frac{\partial}{\partial w_{i,t}}\text{det}(W+\lambda Z) \\
				& = k_{1}\text{det}(W+\lambda Z)^{\left[\substack{ i\\i}\right]} + w_{i,i}\frac{\partial}{\partial w_{i,i}}\text{det}(W+\lambda Z)^{\left[\substack{ i\\i}\right]} \\
				& + \sum_{\substack{s=1\\s\neq i}}^{n} w_{s,i}\frac{\partial}{\partial w_{s,i}}\text{det}(W+\lambda Z)^{\left[\substack{ i\\i}\right]} + \sum_{\substack{t=1\\t\neq i}}^{n} w_{i,t}\frac{\partial}{\partial w_{i,i}}(-1)^{i+t}\text{det}(W+\lambda Z)^{\left[\substack{ i\\t}\right]} \\
				& + \sum_{\substack{s=1\\s\neq i}}^{n} \sum_{\substack{t=1\\t\neq i}}^{n}w_{s,t}\frac{\partial}{\partial w_{s,i}}(-1)^{i+t}\text{det}(W+\lambda Z)^{\left[\substack{ i\\t}\right]}.
			\end{split}
		\end{equation*}
		Notice that in the middle three terms in the sum above we deleted the $i$th row or the $i$th column, so those partial derivatives of determinants are all zero. So
		\begin{equation*}
			\begin{split}
				L^{(k_{1})}_{i,i}(\text{det}(W+\lambda Z)) & = k_{1}\text{det}(W+\lambda Z)^{\left[\substack{ i\\i}\right]} +  \sum_{\substack{s=1\\s\neq i}}^{n} \sum_{\substack{t=1\\t\neq i}}^{n}w_{s,t}\frac{\partial}{\partial w_{s,i}}(-1)^{i+t}\text{det}(W+\lambda Z)^{\left[\substack{ i\\t}\right]} \\
				& = k_{1}\text{det}(W+\lambda Z)^{\left[\substack{ i\\i}\right]} -  \sum_{\substack{s=1\\s\neq i}}^{n} \sum_{\substack{t=1\\t\neq i}}^{n}w_{s,t}(-1)^{s+t+\delta_{s>i}+\delta_{t>i}}\text{det}(W+\lambda Z)^{\left[\substack{ s,i\\t,i}\right]},
			\end{split}
		\end{equation*}
		where $\delta_{s>i}=1$ if $s>i$ and $0$ otherwise, and above we use that for $i\neq t$ we have $\delta_{i>t}+\delta_{t>i}=1$. We compute:
		\begin{equation*}
			\begin{split}
				\sum_{\substack{s=1\\s\neq i}}^{n} \sum_{\substack{t=1\\t\neq i}}^{n}w_{s,t}(-1)^{s+t+\delta_{s>i}+\delta_{t>i}} & \text{det}(W+\lambda Z)^{\left[\substack{ s,i\\t,i}\right]} \\  = & \sum_{\substack{s=1\\s\neq i}}^{n} \sum_{\substack{t=1\\t\neq i}}^{n}(w_{s,t}+\lambda z_{s,t})(-1)^{s+t+\delta_{s>i}+\delta_{t>i}}\text{det}(W+\lambda Z)^{\left[\substack{ s,i\\t,i}\right]} \\
				& - \sum_{\substack{s=1\\s\neq i}}^{n} \sum_{\substack{t=1\\t\neq i}}^{n}\lambda z_{s,t}(-1)^{s+t+\delta_{s>i}+\delta_{t>i}}\text{det}(W+\lambda Z)^{\left[\substack{ s,i\\t,i}\right]} \\
				= & (n-1)\text{det}(W+\lambda Z)^{\left[\substack{i\\i}\right]} -\lambda \frac{d}{d\lambda}\left(\text{det}(W+\lambda Z)^{\left[\substack{i\\i}\right]}\right).
			\end{split}
		\end{equation*}
		That is,
		\[ 	L^{(k_{1})}_{i,i}(\text{det}(W+\lambda Z)) =  k_{1}\text{det}(W+\lambda Z)^{\left[\substack{ i\\i}\right]} - (n-1)\text{det}(W+\lambda Z)^{\left[\substack{i\\i}\right]} +\lambda \frac{d}{d\lambda}\left(\text{det}(W+\lambda Z)^{\left[\substack{i\\i}\right]}\right). \]
		Comparing coefficients of $\lambda^{a}$ (for $a=0,\dots,n-1$) we find
			\begin{equation} \label{eq4}  L^{(k_{1})}_{i,i}(Q_{a}(W,Z)) = (k_{1}+1-n+a)Q^{\left[\substack{i\\i}\right]}_{a}(W,Z). \end{equation}
		Similarly, $L'^{(k_{2})}_{i,i}(\text{det}(W+\lambda Z))$ is equal to the following:
		\begin{equation*} 
			\begin{split}
				&k_{2}\frac{\partial}{\partial z_{i,i}}\text{det}(W+\lambda Z) + \sum_{s=1}^{n}\sum_{t=1}^{n}z_{s,t} \frac{\partial}{\partial z_{s,i}} \frac{\partial}{\partial z_{i,t}}\text{det}(W+\lambda Z) \\
				& = k_{2} \lambda \text{det}(W+\lambda Z)^{\left[\substack{i\\i}\right]} - \sum_{\substack{s=1\\s\neq i}}^{n}\sum_{\substack{t=1\\t\neq i}}^{n}z_{s,t}(-1)^{s+t+\delta_{s>i}+\delta_{t>i}}\lambda^{2}\text{det}(W+\lambda Z)^{\left[\substack{ s,i\\t,i}\right]} \\
				& =  k_{2} \lambda \text{det}(W+\lambda Z)^{\left[\substack{i\\i}\right]} - \lambda^{2}\frac{d}{d\lambda} \left(  \text{det}(W+\lambda Z)^{\left[\substack{i\\i}\right]} \right).
			\end{split}
		\end{equation*}
		Again by comparing coefficients of $\lambda^{a}$ (for $a=1,\dots,n$) we get that
		\begin{equation}\label{eq5} L'^{(k_{2})}_{i,i}(Q_{a}(W,Z)) = (k_{2}+1-a)Q^{\left[\substack{i\\i}\right]}_{a-1}(W,Z). \end{equation}
		Next, for $0\leq a,b\leq n$, we compute: 
		\begin{equation} \label{eq6}
			\begin{split}
				(Q_{a},Q_{b})_{i,W} & = \sum_{s,t=1}^{n} w_{s,t}\left( (-1)^{s+i}Q^{\left[\substack{s\\i}\right]}_{a} (-1)^{i+t} Q^{\left[\substack{i\\t}\right]}_{b} + (-1)^{i+t} Q^{\left[\substack{i\\t}\right]}_{a} (-1)^{s+i} Q^{\left[\substack{s\\i}\right]}_{b} \right)\\
				& = \sum_{s,t=1}^{n}(-1)^{s+t}w_{s,t}\left( Q^{\left[\substack{s\\i}\right]}_{a} Q^{\left[\substack{i\\t}\right]}_{b} + Q^{\left[\substack{i\\t}\right]}_{a} Q^{\left[\substack{s\\i}\right]}_{b} \right),
			\end{split}
		\end{equation}
		and
		\begin{equation}\label{eq7}
			\begin{split}
				(Q_{a},Q_{b})_{i,Z} & = \sum_{s,t=1}^{n} z_{s,t}\left( (-1)^{s+i}Q^{\left[\substack{s\\i}\right]}_{a-1} (-1)^{i+t} Q^{\left[\substack{i\\t}\right]}_{b-1} + (-1)^{i+t} Q^{\left[\substack{i\\t}\right]}_{a-1} (-1)^{s+i} Q^{\left[\substack{s\\i}\right]}_{b-1} \right)\\
				& = \sum_{s,t=1}^{n}(-1)^{s+t}z_{s,t}\left( Q^{\left[\substack{s\\i}\right]}_{a-1} Q^{\left[\substack{i\\t}\right]}_{b-1} + Q^{\left[\substack{i\\t}\right]}_{a-1} Q^{\left[\substack{s\\i}\right]}_{b-1} \right).
			\end{split}
		\end{equation}
		Notice that
		\begin{equation}\label{eq8} \sum_{s=1}^{n}(-1)^{s+t}(w_{s,t}+\lambda z_{s,t})\text{det}(W+\lambda Z)^{\left[\substack{s\\i}\right]}=\delta_{i,t}(-1)^{i+t}\text{det}(W+\lambda Z)=\delta_{i,t}\text{det}(W+\lambda Z).\end{equation}
		Here $\delta_{i,t}$ is the usual Kronecker delta. This equality holds since the left hand side is (up to sign) the determinant of the matrix obtained from $W+\lambda Z$ by copying the entries of the $t$-th column to the $i$-th column, which is $0$ if $i\neq t$ and the usual determinant of $W+\lambda Z$ otherwise. Comparing coefficients of $\lambda^{a}$ on each side of (\ref{eq8}) we find
		\[ \sum_{s=1}^{n}(-1)^{s+t}w_{s,t}Q^{\left[\substack{s\\i}\right]}_{a} +\sum_{s=1}^{n}(-1)^{s+t} z_{s,t}Q^{ \left[\substack{s\\i}\right]}_{a-1} = \delta_{i,t}Q_{a},\]
		and
		\[ \sum_{t=1}^{n}(-1)^{s+t}w_{s,t}Q^{\left[\substack{i\\t}\right]}_{a} +\sum_{t=1}^{n}(-1)^{s+t} z_{s,t}Q^{ \left[\substack{i\\t}\right]}_{a-1} = \delta_{s,i}Q_{a}.\]
		In particular,
		\[ \sum_{t=1}^{n}\sum_{s=1}^{n}(-1)^{s+t}w_{s,t}Q^{\left[\substack{s\\i}\right]}_{a}Q^{\left[\substack{i\\t}\right]}_{b} +\sum_{t=1}^{n}\sum_{s=1}^{n}(-1)^{s+t} z_{s,t} Q^{ \left[\substack{s\\i}\right]}_{a-1} Q^{\left[\substack{i\\t}\right]}_{b} = Q_{a}Q^{\left[\substack{i\\i}\right]}_{b},\]
		and
		\[ \sum_{s=1}^{n}\sum_{t=1}^{n}(-1)^{s+t}w_{s,t}Q^{\left[\substack{i\\t}\right]}_{a}Q^{\left[\substack{s\\i}\right]}_{b} +\sum_{s=1}^{n}\sum_{t=1}^{n}(-1)^{s+t} z_{s,t}Q^{ \left[\substack{i\\t}\right]}_{a-1}Q^{\left[\substack{s\\i}\right]}_{b} = Q_{a}Q^{\left[\substack{i\\i}\right]}_{b}.\]
		\vspace*{1em}
		Together with (\ref{eq6}) and (\ref{eq7}) we extract the following:
		\[ (Q_{a},Q_{b})_{i,W} + (Q_{a},Q_{b+1})_{i,Z} = 2 Q_{a} Q^{\left[\substack{i\\i}\right]}_{b} 
		\hspace*{3ex}\text{and}\hspace*{3ex} (Q_{a},Q_{b})_{i,W} + (Q_{a+1},Q_{b})_{i,Z} = 2  Q^{\left[\substack{i\\i}\right]}_{a} Q_{b}, \]
		so
		\begin{equation}\label{eq9} (Q_{a},Q_{b})_{i,W} = 2Q_{a}Q^{\left[\substack{i\\i}\right]}_{b} - 2 Q^{\left[\substack{i\\i}\right]}_{a-1}Q_{b+1} + (Q_{a-1},Q_{b+1})_{i,W},\end{equation}
		and
		\begin{equation}\label{eq10} (Q_{a},Q_{b})_{i,Z} = 2 Q^{\left[\substack{i\\i}\right]}_{a-1}Q_{b} - 2 Q_{a-1} Q^{\left[\substack{i\\i}\right]}_{b} + (Q_{a-1},Q_{b+1})_{i,Z}. \end{equation}
		\phantom{\qedhere}
		\end{proof}
		
		To proceed, we need the following lemma:
		
		\begin{lemma} \label{lemma lin indep}
		For any $n\geq 1$, the polynomials
		\[ \sum_{i=1}^{n}Q^{\left[\substack{i\\i}\right]}_{a} \hspace*{5ex} 0\leq a\leq n-1\]
		are linearly independent over $\mathcal{Q}_{n}(2)$.
		\end{lemma}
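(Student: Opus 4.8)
The plan is to recast the assertion as an algebraic independence statement and then verify that by the Jacobian criterion, proceeding by induction on $n$. Write $g_{a}:=\sum_{i=1}^{n}Q^{\left[\substack{i\\i}\right]}_{a}\in\C[W,Z]$ for $0\le a\le n-1$; since $\frac{\partial}{\partial w_{ii}}\det(W+\lambda Z)=\det\bigl((W+\lambda Z)^{\left[\substack{i\\i}\right]}\bigr)$, comparing coefficients of $\lambda^{a}$ gives $g_{a}=\bigl(\sum_{i}\frac{\partial}{\partial w_{ii}}\bigr)Q_{a}$. The case $n=1$ is immediate, since there $g_{0}=1$. So assume $n\ge2$. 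The key claim I would prove is that the $2n+1$ polynomials $Q_{0},\dots,Q_{n},g_{0},\dots,g_{n-1}$ are algebraically independent over $\C$. Granting this, the $\C$-subalgebra of $\C[W,Z]$ they generate is a polynomial ring on these generators; since $\mathcal{Q}_{n}(2)=\C[Q_{0},\dots,Q_{n}]$ (Corollary \ref{coro Qnv}), any relation $\sum_{a=0}^{n-1}R_{a}(Q_{0},\dots,Q_{n})\,g_{a}=0$ with $R_{a}\in\mathcal{Q}_{n}(2)$ is then a polynomial that is linear in the $g_{a}$, forcing every $R_{a}(Q_{0},\dots,Q_{n})=0$, hence $R_{a}=0$ by Lemma \ref{lemma alg ind}; this is exactly the desired linear independence over $\mathcal{Q}_{n}(2)$.

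To establish the algebraic independence I would induct on $n\ge2$. For the base case $n=2$ one has $g_{0}=\Tr W$, $g_{1}=\Tr Z$, $Q_{0}=\det W$, $Q_{2}=\det Z$, $Q_{1}=\Tr(\operatorname{adj}(W)Z)$, and a direct computation of the five differentials at $W=\mathrm{diag}(1,2)$, $Z=\left(\begin{smallmatrix}0&1\\1&0\end{smallmatrix}\right)$ shows $dQ_{0},dQ_{1},dQ_{2},dg_{0},dg_{1}$ are $\C$-linearly independent, so the Jacobian has full rank. For the inductive step I would restrict all polynomials to the subspace of block-diagonal pairs $W=W'\oplus(w_{nn})$, $Z=Z'\oplus(z_{nn})$ with $W',Z'\in M_{n-1}(\C)$. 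Writing $Q'_{b},g'_{b}$ for the size-$(n-1)$ analogues, multiplicativity of the determinant and of the sum of principal $(n-1)$-minors gives
\[ Q_{a}=w_{nn}Q'_{a}+z_{nn}Q'_{a-1},\qquad g_{a}=Q'_{a}+w_{nn}g'_{a}+z_{nn}g'_{a-1} \]
(with $Q'_{-1}=Q'_{n}=g'_{-1}=g'_{n-1}=0$). By the inductive hypothesis $Q'_{0},\dots,Q'_{n-1},g'_{0},\dots,g'_{n-2}$ are algebraically independent, and one may choose a point of $M_{n-1}(\C)^{2}$ at which moreover $Q'_{n-1}\neq0$ and their differentials are independent; together with the two new variables $w_{nn},z_{nn}$ this yields $2n+1$ algebraically independent quantities, and the displayed formulas express $Q_{0},\dots,Q_{n},g_{0},\dots,g_{n-1}$ as $2n+1$ polynomials in them. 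Evaluating the resulting $(2n+1)\times(2n+1)$ Jacobian at $w_{nn}=1$, $z_{nn}=0$, one finds that after obvious row operations it is block-triangular with a $2\times2$ corner of determinant $(Q'_{n-1})^{2}\neq0$; hence the restricted polynomials, and therefore the original ones, are algebraically independent, closing the induction.

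The only genuine calculations here are the two Jacobian verifications, and the inductive one is essentially triangular once the block-diagonal reduction formulas are written down. Accordingly I expect the main obstacle to be conceptual rather than computational: identifying the right reformulation (the algebraic independence of $Q_{0},\dots,Q_{n},g_{0},\dots,g_{n-1}$) and keeping careful track of the fact that the coefficients $R_{a}$ in the relation are constrained to lie in the proper subring $\mathcal{Q}_{n}(2)\subsetneq\C[W,Z]$, which is precisely what makes the linear independence non-trivial.
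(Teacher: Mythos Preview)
Your proposal is correct and proves a strictly stronger statement than the paper needs: full algebraic independence of $Q_{0},\dots,Q_{n},g_{0},\dots,g_{n-1}$ over $\C$, from which the linear independence of the $g_{a}$ over $\mathcal{Q}_{n}(2)=\C[Q_{0},\dots,Q_{n}]$ is immediate. The paper takes a more elementary and direct route. It also inducts on $n$, but works with the putative relation $\sum_{a}R_{a}(Q_{0},\dots,Q_{n})\sum_{i}Q^{\left[\substack{i\\i}\right]}_{a}=0$ itself: it sets $z_{j,n}=z_{n,j}=0$ for all $j$ and $w_{j,n}=w_{n,j}=0$ for $j<n$ (so $Q_{n}$ vanishes and every remaining $Q_{m}$ acquires a factor $w_{n,n}$), reduces to the $(n-1)$ case to conclude $R_{a}(Q_{0},\dots,Q_{n-1},0)=0$, and then iterates to show that each $R_{a}$ is divisible by arbitrarily high powers of $Q_{n}$, hence zero. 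Your block-diagonal restriction is a close cousin of this specialization, but you keep $z_{n,n}$ free and feed the reduction into a Jacobian computation instead of a divisibility argument. What you gain is a clean structural statement (the $2n+1$ polynomials generate a polynomial subring) that could be useful elsewhere; what the paper's argument buys is that it avoids any differential or genericity reasoning and proves exactly the lemma with nothing to spare.
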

		
		\begin{proof}[Proof of Lemma \ref{lemma lin indep}]
		
		We use induction on $n$. For $n=1$, the statement is trivially true since the only polynomial $Q^{\left[\substack{1\\1}\right]}_{0}$ is the determinant of the empty matrix i.e. the constant $1$.	
		Suppose the statement holds up to $n-1$, and suppose there exist polynomials $R_{a}$ in $Q_{0},\dots,Q_{n}$ for $0\leq a \leq n-1$ such that
		\[ \sum_{a=0}^{n-1}R_{a}(Q_{0},\dots,Q_{n})\left(\sum_{i=1}^{n}Q^{\left[\substack{i\\i}\right]}_{a}\right) =0. \]
		We show that $R_{a}=0$ for every $0\leq a\leq n-1$. Consider the restriction to $z_{j,n}=z_{n,j}=0$ for every $1\leq j\leq n$ and $w_{j,n}=w_{n,j}=0$ for every $1\leq j\leq n-1$. In this case we have $Q_{n}=0$, $Q_{m}=w_{n,n}Q^{\left[\substack{n\\n}\right]}_{m}$ for $0\leq m\leq n-1$, $Q^{\left[\substack{n\\n}\right]}_{a}$ is unchanged for every $a$, and for $0\leq i\leq n-1$ $Q^{\left[\substack{i\\i}\right]}_{a}$ becomes $w_{n,n}Q^{\left[\substack{i,n\\i,n}\right]}_{a}$. So our equation above restricts to
		\[ \sum_{a=0}^{n-1}R_{a}(w_{n,n}Q^{\left[\substack{n\\n}\right]}_{0},\dots,w_{n,n}Q^{\left[\substack{n\\n}\right]}_{n-1},0) \left( \sum_{i=1}^{n-1}w_{n,n} Q^{\left[\substack{i,n\\i,n}\right]}_{a} + Q^{\left[\substack{n\\n}\right]}_{a}\right) =0. \]
		Comparing terms containing $w_{n,n}$ we have
		\[ \sum_{a=0}^{n-1}R_{a}(w_{n,n}Q^{\left[\substack{n\\n}\right]}_{0},\dots,w_{n,n}Q^{\left[\substack{n\\n}\right]}_{n-1},0) \left( \sum_{i=1}^{n-1}w_{n,n} Q^{\left[\substack{i,n\\i,n}\right]}_{a} \right) =0. \]
		Further restricting to $w_{n,n}=1$, note that we obtain exactly the relation for the case $n-1$ with the top left $(n-1)\times(n-1)$ minors of $W$ and $Z$, $W^{\left[\substack{n\\n}\right]}$ and $Z^{\left[\substack{n\\n}\right]}$. So we must have that $R_{a}(Q_{0},\dots,Q_{n-1},0)=0$ for every $a$, and thus $R_{a}(Q_{0},\dots,Q_{n-1},Q_{n})$ is divisible by $Q_{n}$. But we can factor out a suitable power of $Q_{n}$ and repeat the same argument, showing that for each $0\leq a\leq n-1$, $R_{a}(Q_{0},\dots,Q_{n})$ is divisible by arbitrarily large powers of $Q_{n}$ and thus must be zero.
		
		\end{proof}

		\begin{proof}[Returning to the proof of Theorem \ref{thm coeff}]
		We consider the action of the Laplacian operator $\sum_{i=1}^{n}\Delta_{i,i}=\sum_{i=1}^{n}L^{(k_{1})}_{i,i}+L'^{(k_{2})}_{i,i}$ on a polynomial $R\in \mathcal{Q}_{n,v}(2)$. By Corollary \ref{coro Qnv} we may write
		\[ R = \sum_{\alpha}C(\alpha)\prod_{m=0}^{n}Q_{m}^{\alpha_{m}}=\sum_{\alpha}C(\alpha)Q^{\alpha},\]
		where we sum over $\alpha=(\alpha_{0},\dots,\alpha_{n})\in\Z_{\geq 0}^{n+1}$ with $|\alpha|:=\alpha_{0}+\alpha_{1}+\dots+\alpha_{n}=v$, and coefficients $C(\alpha)\in\C$. As a consequence of Lemma \ref{lemma lin indep} we can obtain relations among the coefficients $C(\alpha)$ by the vanishing of the coefficient of $\sum_{i=1}^{n}Q^{\left[\substack{i\\i}\right]}_{a}$ in $\sum_{i=1}^{n}L^{(k_{1})}_{i,i}(R)+L'^{(k_{2})}_{i,i}(R)$. Using (\ref{eq4}), (\ref{eq5}), (\ref{eq9}) and (\ref{eq10}) we find:
		
		\begin{alignat*}{3}
			&(L_{i,i}^{(k_{1})}+L'^{(k_{2})}_{i,i}&&)(R) && \\
			&= \sum_{\alpha}C(\alpha) \Bigg( &&\sum_{m=0}^{n} \alpha_{m}\frac{Q^{\alpha}}{Q_{m}} && (L_{i,i}^{(k_{1})}+L'^{(k_{2})}_{i,i})(Q_{m}) \\
			& && + \sum_{0\leq m \leq \ell\leq n} && \alpha_{m} (\alpha_{\ell}-\delta_{m,\ell})(1-\frac{\delta_{m,\ell}}{2}) \frac{Q^{\alpha}}{Q_{m}Q_{\ell}} \left((Q_{m},Q_{\ell})_{i,W}+(Q_{m},Q_{\ell})_{i,Z}\right) \Bigg) \\
			& = \sum_{\alpha}C(\alpha) \Bigg(&& \sum_{m=0}^{n} \alpha_{m}\frac{Q^{\alpha}}{Q_{m}} && \left((k_{1}+1-n+m)Q^{\left[\substack{i\\i}\right]}_{m} + (k_{2}+1-m)Q^{\left[\substack{i\\i}\right]}_{m-1} \right) \\
			& &&+ \sum_{0\leq m \leq \ell\leq n} && \alpha_{m} (\alpha_{\ell}-\delta_{m,\ell})(1-\frac{\delta_{m,\ell}}{2}) \frac{Q^{\alpha}}{Q_{m}Q_{\ell}} \bigg( 2 Q_{m}Q^{\left[\substack{i\\i}\right]}_{\ell} - 2 Q^{\left[\substack{i\\i}\right]}_{m-1} Q_{\ell+1} \\
			& && && + (Q_{m-1},Q_{\ell+1})_{i,W} + 2 Q^{\left[\substack{i\\i}\right]}_{m-1}Q_{\ell} - 2 Q_{m-1} Q^{\left[\substack{i\\i}\right]}_{\ell} + (Q_{m-1},Q_{\ell+1})_{i,Z} \bigg) \Bigg). \\
		\end{alignat*}
		For each $m=0,\dots,n$, if we sum over $i=1,2,\dots,n$, the ``coefficient'' of $\sum_{i=1}^{n}Q^{\left[\substack{i\\i}\right]}_{m}$ is some homogeneous polynomial $R_{m}$ of degree $v-1$ with respect to the polynomials $Q_{0},\dots,Q_{n}$, that is, some element of $\mathcal{Q}_{n,v-1}(2)$. If $\sum_{i=1}^{n}L^{(k_{1})}_{i,i}(R)+L'^{(k_{2})}_{i,i}(R)$ is equal to zero then by Lemma \ref{lemma lin indep}, each $R_{m}$ must be the zero polynomial. Moreover, by Lemma \ref{lemma alg ind} the scalar coefficient of each monomial (in the polynomials $Q_{0},\dots,Q_{n}$) of $R_{m}$ must in fact be zero. Therefore we extract the coeffient of the term $\frac{Q^{\alpha}}{Q_{m}}\sum_{i=1}^{n}Q^{\left[\substack{i\\i}\right]}_{m}$ for each fixed $m=0,\dots,n-1$ and $\alpha$:
		\begin{equation*}
			\begin{split} 
				0 = & C(\alpha)\alpha_{m}(k_{1}+1-n+m)  + C(\alpha-1_{m}+1_{m+1}) (\alpha-1_{m}+1_{m+1})_{m+1}(k_{2}-m) \\
				& + \sum_{\substack{0\leq \ell \leq \ell'\leq m\\ \ell+\ell'-m\geq 0}} C(\alpha(m,\ell,\ell')) \alpha(m,\ell,\ell')_{\ell}  (\alpha(m,\ell,\ell')_{\ell'}-\delta_{\ell,\ell'}) (2-\delta_{\ell,\ell'})\\
				& - \sum_{\substack{0\leq \ell\leq \ell'\leq m\\\ell+\ell'-m-1\geq 0}} C(\tilde{\alpha}(m,\ell,\ell')) \tilde{\alpha}(m,\ell,\ell')_{\ell} (\tilde{\alpha}(m,\ell,\ell')_{\ell'}-\delta_{\ell,\ell'}) (2-\delta_{\ell,\ell'})\\
				& + \sum_{\substack{m<\ell\leq \ell'\leq n\\\ell+\ell'-m-1\leq n}} C(\tilde{\alpha}(m,\ell,\ell')) \tilde{\alpha}(m,\ell,\ell')_{\ell} (\tilde{\alpha}(m,\ell,\ell')_{\ell'}-\delta_{\ell,\ell'}) (2-\delta_{\ell,\ell'})\\
				& - \sum_{\substack{m<\ell\leq\ell'\leq n\\\ell+\ell'-m\leq n}} C(\alpha(m,\ell,\ell'))\alpha(m,\ell,\ell')_{\ell} (\alpha(m,\ell,\ell')_{\ell'}-\delta_{\ell,\ell'}) (2-\delta_{\ell,\ell'}). \\
			\end{split}
		\end{equation*}
		In the relations above, we use the notation $\alpha(m,\ell,\ell'):=\alpha-1_{m}+1_{\ell}+1_{\ell'}-1_{\ell+\ell'-m}$ and $\tilde{\alpha}(m,\ell,\ell'):=\alpha-1_{m}+1_{\ell}+1_{\ell'}-1_{\ell+\ell'-m-1}$.
		We also take $C(\beta)$ to be zero if $\beta_{j}<0$ for any $0\leq j\leq n$. We fix the lexicographic order on monomials i.e. on tuples $\alpha$. That is, since all monomials concerned have the same degree $v$, 
		\[ \alpha > \beta \Leftrightarrow \exists j \text{ s.t. } \alpha_{i}=\beta_{i} \hspace*{2ex} \forall 0\leq i<j \text{ and } \alpha_{j}>\beta_{j}.  \]
		Now, fix $\alpha\neq(0,\dots,0,v)$ (so $\alpha>(0,\dots,0,v)$). Choose $m$ to be the smallest $0\leq m\leq n-1$ such that $\alpha_{m}>0$. The relation for this choice of $\alpha$ and $m$ becomes that given in the statement of Theorem \ref{thm coeff}:
		
		\begin{equation*}
			\begin{split} 
				0 = & C(\alpha)\alpha_{m}(k_{1}+1-n+m)  + C(\alpha-1_{m}+1_{m+1}) (\alpha-1_{m}+1_{m+1})_{m+1}(k_{2}-m) \\
				& + C(\alpha)\alpha_{m} (\alpha_{m}-1) \\
				& - 0\\
				& + \sum_{\substack{m<\ell\leq \ell'\leq n\\\ell+\ell'-m-1\leq n}} C(\tilde{\alpha}(m,\ell,\ell')) \tilde{\alpha}(m,\ell,\ell')_{\ell} (\tilde{\alpha}(m,\ell,\ell')_{\ell'}-\delta_{\ell,\ell'}) (2-\delta_{\ell,\ell'})\\
				& - \sum_{\substack{m<\ell\leq \ell'\leq n\\\ell+\ell'-m\leq n}} C(\alpha(m,\ell,\ell'))\alpha(m,\ell,\ell')_{\ell} (\alpha(m,\ell,\ell')_{\ell'}-\delta_{\ell,\ell'}) (2-\delta_{\ell,\ell'}). \\
			\end{split}
		\end{equation*}
		
		In particular, for each index $\alpha>(0,\dots,0,v)$ we have a linear relation determining the coefficient $C(\alpha)$ in terms of the values of coefficients $C(\beta)$ with $\beta<\alpha$. Therefore, we can write each coefficient $C(\alpha)$ in terms of $C((0,\dots,0,v))$ and so the polynomial $R$ is unique up to scaling. That is, the space of polynomials $\mathcal{H}_{n,v}(k_{1},k_{2})$ is one-dimensional and therefore by Corollary \ref{coro} the corresponding differential operator on Hermitian modular forms is unique up to scaling. \end{proof}

	\begin{remark} The coefficients of the original Rankin--Cohen brackets as given in \cite{Zag}, for example, satisfy the corresponding linear relations given in Theorem \ref{thm coeff} in the case $n=1$. Additionally, once re-written in terms of the polynomials $Q_{0}$ ,$Q_{1}$ and $Q_{2}$ the differential operators on Hermitian modular forms described by Martin--Senadheera in \cite{MarSen} have coefficients which satisfy the linear relations in Theorem \ref{thm coeff} for $n=2$, and the respective conditions on the degree $v$ agree in the case studied \textit{loc. cit.}.
		
	\end{remark}
	
	\subsection{Application to Fourier Expansions and Cusp Forms}
	As stated in \cite{eischen}, Hermitian modular forms on the unitary group $U(n,n)$ have Fourier expansions indexed by certain lattices of Hermitian matrices. That is, for a $\C$-valued Hermitian modular form $F$ on $U(n,n)$ there is some lattice $H\subset\text{Herm}_{n}(\C)$ such that $F$ has a Fourier expansion of the form
	\[ F(Z)=\sum_{h\in H}c(h)e^{2\pi i\Tr (hZ)} \]
	for some coefficients $c(h)\in\C$. Furthermore, by \cite{shimura}, Proposition 5.7, if $L^{+}\neq\Q$ or $n\neq 1$ then $c(h)=0$ unless $h$ is positive semi-definite. We assume this to be the case.
	For $\C$-valued Hermitian modular forms $F_{1},\dots,F_{r}$ of weights $k_{1},\dots,k_{r}$ respectively let the Fourier series be given by
	\[ F_{j}(Z)=\sum_{\substack{h\in H\\ h\geq 0}} c_{j}(h)e^{2\pi i\Tr (hZ)}\]
	for each $1\leq j\leq r$. For $Q\in\mathcal{H}_{n,v}(k_{1},\dots,k_{r})$ denote 
	\[G(Z):=Q\left(\frac{\partial}{\partial Z}\right)\left(F_{1}(Z_{1})\dots F_{r}(Z_{r})\right)\big|_{Z_{1}=\dots=Z_{r}=Z}. \]
	After a straightforward calculation, we can write the Fourier series for $G$ in terms of the Fourier series for $F_{1},\dots,F_{r}$ as
	\[ G(Z)=\sum_{\substack{h_{1},\dots,h_{r}\in H\\ h_{1},\dots,h_{r}\geq 0}}c_{1}(h_{1})\dots c_{r}(h_{r})Q(h_{1},\dots,h_{r})e^{2\pi i\sum_{j=1}^{r}\Tr (h_{j}Z)}. \]
	Recall that we say that a Hermitian modular form is a cusp form if the Fourier coefficients $c(h)$ are only non-zero at positive definite matrices $h$.\\
	
	\begin{proposition} \label{prop cusp} If $L^{+}\neq \Q$ or $n\neq 1$, then for any $\C$-valued Hermitian modular forms $F_{1}$ and $F_{2}$ on $U(n,n)$ of level $\Gamma$ and weights $k_{1}$ and $k_{2}$ respectively and for any $Q\in\mathcal{H}_{n,v}(k_{1},k_{2})$ with $v>0$ satisfying the hypotheses of Theorem \ref{thm}, we have that
		\[G(Z):=Q\left(\frac{\partial}{\partial Z}\right)\left(F_{1}(Z_{1}) F_{2}(Z_{2})\right)\big|_{Z_{1}=Z_{2}=Z} \]
		is a cusp form of weight $k_{1}+k_{2}+2v$ and level $\Gamma$. \end{proposition}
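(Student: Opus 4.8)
The plan is to read off the Fourier coefficients of $G$ from the expansion already displayed above and show that every coefficient attached to a non-positive-definite index vanishes. Recall that, under the hypothesis $L^{+}\neq\Q$ or $n\neq 1$, all Fourier coefficients of $F_{1}$ and $F_{2}$ are supported on positive semi-definite $h\in H$ (\cite{shimura}, Proposition 5.7), and that
\[ G(Z)=\sum_{\substack{h_{1},h_{2}\in H\\ h_{1},h_{2}\geq 0}}c_{1}(h_{1})c_{2}(h_{2})\,Q(h_{1},h_{2})\,e^{2\pi i\Tr((h_{1}+h_{2})Z)} \]
up to the nonzero scalar coming from the homogeneity of $Q$. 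Hence the Fourier coefficient of $G$ at a Hermitian matrix $h$ is $\sum c_{1}(h_{1})c_{2}(h_{2})Q(h_{1},h_{2})$, the sum being over $h_{1},h_{2}\geq 0$ with $h_{1}+h_{2}=h$; since all such $h_{1},h_{2}$ are positive semi-definite, so is $h$, and it suffices to prove $Q(h_{1},h_{2})=0$ whenever $h_{1},h_{2}\geq 0$ and $h_{1}+h_{2}$ is not positive definite.

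First I would record an elementary linear-algebra fact. If $h_{1},h_{2}\geq 0$ and $h:=h_{1}+h_{2}$ has a nonzero kernel vector $w$, then $0=w^{\ast}hw=w^{\ast}h_{1}w+w^{\ast}h_{2}w$ with both summands nonnegative, so $w^{\ast}h_{1}w=w^{\ast}h_{2}w=0$ and therefore $h_{1}w=h_{2}w=0$. Consequently $(h_{1}+\lambda h_{2})w=0$ for every $\lambda\in\C$, so $\det(h_{1}+\lambda h_{2})$ vanishes identically as a polynomial in $\lambda$. Comparing coefficients of $\lambda^{a}$ with the defining identity $\det(W+\lambda Z)=\sum_{a=0}^{n}Q_{a}(W,Z)\lambda^{a}$ gives $Q_{a}(h_{1},h_{2})=0$ for all $0\leq a\leq n$.

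It remains to deduce $Q(h_{1},h_{2})=0$. By Corollary \ref{coro Qnv} we have $Q\in\mathcal{H}_{n,v}(k_{1},k_{2})\subseteq\mathcal{Q}_{n,v}(2)=\C[Q_{0},\dots,Q_{n}]_{v}$, so $Q$ is a $\C$-linear combination of degree-$v$ monomials $\prod_{j=0}^{n}Q_{j}^{\alpha_{j}}$ with $\sum_{j}\alpha_{j}=v$. Since $v>0$, each such monomial contains a factor $Q_{a}$ for some $a$, which by the previous step annihilates it at $(h_{1},h_{2})$; hence $Q(h_{1},h_{2})=0$. Therefore every Fourier coefficient of $G$ at a non-positive-definite index is zero, i.e. $G$ is a cusp form; that $G$ is a Hermitian modular form of weight $k_{1}+k_{2}+2v$ and level $\Gamma$ follows from Corollary \ref{coro}, since $Q\in\mathcal{H}_{n,v}(k_{1},k_{2})$ and the hypotheses of Theorem \ref{thm} are assumed.

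As for difficulty, there is no serious obstacle here; the points needing care are the bookkeeping of the convolution of the two Fourier series (already carried out in the excerpt) and the two structural inputs that make the argument go through: the positive semi-definiteness of the Fourier support, which is exactly where the hypothesis $L^{+}\neq\Q$ or $n\neq 1$ enters, and the identity $\mathcal{Q}_{n,v}(2)=\C[Q_{0},\dots,Q_{n}]_{v}$, which guarantees that $Q$ has no constant term when $v>0$ and hence is divisible, termwise, by one of the vanishing generators $Q_{a}$.
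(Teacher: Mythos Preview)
Your argument is correct and follows essentially the same route as the paper's own proof: write out the convolution Fourier expansion of $G$, reduce to showing $Q(h_{1},h_{2})=0$ whenever $h_{1},h_{2}\geq 0$ but $h_{1}+h_{2}$ is singular, use a common null vector to force $\det(h_{1}+\lambda h_{2})\equiv 0$ and hence $Q_{a}(h_{1},h_{2})=0$ for all $a$, and conclude via $\mathcal{Q}_{n,v}(2)=\C[Q_{0},\dots,Q_{n}]_{v}$. If anything, your version is slightly more explicit, since you spell out the standard fact that $h_{j}\geq 0$ and $w^{\ast}h_{j}w=0$ imply $h_{j}w=0$, which is the step that makes $\det(h_{1}+\lambda h_{2})=0$ hold for all $\lambda$ rather than merely $\lambda\geq 0$.
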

	
	\begin{proof} By the result of Theorem \ref{thm}, we know that $G$ is indeed a Hermitian modular form of the correct weight and level. It remains to show that $G$ is a cusp form. For each $F_{j}$ ($j=1,2$) write the Fourier expansion of $F_{j}$ as
		\[ F_{j}(Z)=\sum_{\substack{h\in H\\ h\geq 0}} c_{j}(h)e^{2\pi i\Tr (hZ)}.\]
		So, as above, the Fourier expansion of $G$ is given by
		\[ G(Z)=\sum_{\substack{h_{1},h_{2}\in H\\ h_{1},h_{2}\geq 0}}c_{1}(h_{1}) c_{2}(h_{2})Q(h_{1},h_{2})e^{2\pi i\Tr ((h_{1}+h_{2})Z)}. \]
		Since $G$ is indeed a Hermitian modular form, we know that the coefficient 
		\[c_{1}(h_{1})c_{2}(h_{2})Q(h_{1},h_{2})\]
		is equal to zero whenever $h_{1}+h_{2}$ is not positive semi-definite. It remains to show that this Fourier coefficient is also zero if $h_{1}+h_{2}$ is positive semi-definite but not positive definite. We shall show that in this case $Q(h_{1},h_{2})=0$. Indeed, if $h_{1}+h_{2}$ is not positive definite then by definition there is a vector $w$ such that
		\[ w^{\ast}(h_{1}+h_{2})w=0. \]
		Since each of $h_{1}$ and $h_{2}$ is positive semi-definite, it must be the case that $w^{\ast}h_{1}w=w^{\ast}h_{2}w=0$. Therefore
		\[ w^{\ast}(h_{1}+\lambda h_{2})w = 0 \]
		for any constant $\lambda$. In particular, $\text{det}(h_{1}+\lambda h_{2})$ is identically zero as a function of $\lambda$. With notation as in Lemma \ref{lemma Qn1}, we have 
		\[ \text{det}(h_{1}+\lambda h_{2})=\sum_{m=0}^{n}Q_{m}(h_{1},h_{2})\lambda^{m}, \]
		and so it must be that $Q_{m}(h_{1},h_{2})=0$ for each $m=0,\dots,n$. By the description of elements of $\mathcal{H}_{n,v}(k_{1},k_{2})$ in Theorem \ref{thm coeff} we know that $Q(h_{1},h_{2})$ is some homogeneous degree-$v$ polynomial in the values of $Q_{0}(h_{1},h_{2}),\dots,Q_{n}(h_{1},h_{2})$, and therefore must also be zero.
	\end{proof}

	\addcontentsline{toc}{section}{References}
	\bibliography{rcbibtex}{}
	\bibliographystyle{plain}

\end{document}